\declaretheorem[name=Definition,style=definition,qed=$\dashv$,
numberwithin=section]{dfn}
\declaretheorem[name=Theorem,style=plain,sibling=dfn]{tm}
\declaretheorem[name=Theorem,style=plain,numbered=no]{tm*}
\declaretheorem[name=Fact,style=plain,sibling=dfn]{fact}
\declaretheorem[name=Fact,style=plain,numbered=no]{fact*}
\declaretheorem[name=Lemma,style=plain,sibling=dfn]{lem}
\declaretheorem[name=Corollary,style=plain,sibling=dfn]{cor}
\declaretheorem[name=Remark,style=definition,sibling=dfn]{rem}
\declaretheorem[name=Claim,style=plain]{clm}
\declaretheorem[name=Claim,style=plain,numbered=no]{clm*}
\declaretheorem[name=Sublaim,style=plain,numbered=no]{sclm*}
\newcommand{\cHull}{{\mathrm{c}\Hull}}
\newcommand{\nins}{\not\ins}
\newcommand{\pins}{\lhd}
\newcommand{\core}{\mathfrak{C}}
\newcommand{\ins}{\trianglelefteq}
\newcommand{\QQ}{\mathbb Q}
\newcommand{\PP}{\mathbb P}
\newcommand{\sub}{\subseteq}
\newcommand{\cross}{\times}
\newcommand{\all}{\forall}
\newcommand{\inter}{\cap}
\renewcommand{\int}{\inter}
\newcommand{\om}{\omega}
\newcommand{\pow}{\mathcal{P}}
\newcommand{\OR}{\mathrm{OR}}
\newcommand{\Hull}{\mathrm{Hull}}
\newcommand{\cut}{\backslash}
\newcommand{\Tt}{\mathcal{T}}
\newcommand{\Uu}{\mathcal{U}}
\newcommand{\Ll}{\mathcal{L}}
\newcommand{\rg}{\mathrm{rg}}
\newcommand{\crit}{\mathrm{cr}}
\newcommand{\rest}{\!\upharpoonright\!}
\newcommand{\com}{\circ}
\newcommand{\Ult}{\mathrm{Ult}}
\newcommand{\sats}{\models}
\newcommand{\elem}{\preccurlyeq}
\newcommand{\J}{\mathcal{J}}
\newcommand{\HOD}{\mathrm{HOD}}
\newcommand{\ZFC}{\mathsf{ZFC}}
\newcommand{\ZF}{\mathsf{ZF}}
\newcommand{\Coll}{\mathrm{Col}}
\newcommand{\id}{\mathrm{id}}
\newcommand{\forces}{\dststile{}{}}
\newcommand{\Vop}{\mathrm{Vop}}
\newcommand{\ZFR}{\mathrm{ZFR}}
\newcommand{\OD}{\mathrm{OD}}
\newcommand{\ot}{\mathrm{ot}}
\newcommand{\lpole}{\left\lfloor}
\newcommand{\rpole}{\right\rfloor}
\newcommand{\univ}[1]{\lpole #1\rpole}
\newcommand{\tu}{\textup}
\newcommand{\NN}{\mathbb{N}}
\newcommand{\trcl}{\mathrm{trcl}}
\DeclareMathOperator{\cof}{cof}
\DeclareMathOperator{\wfp}{wfp}
\DeclareMathOperator{\rank}{rank}
\begin{document}

\title{Mice with Woodin cardinals from a Reinhardt}

\author{Farmer Schlutzenberg\\
farmer.schlutzenberg@tuwien.ac.at}

\maketitle

\begin{abstract} 
If there is a Reinhardt cardinal then
\begin{enumerate}\item $M_n^\#(X)$
exists and is fully iterable (above $X$) for every transitive set $X$ and 
every $n<\om$; and
\item Projective Determinacy holds in every set generic extension.\end{enumerate}\end{abstract}

\section{Introduction}

The purpose of this note is to provide a proof
of the following result, which was originally announced in the author's abstract for the Oberwolfach set theory conference in 2020 \cite[p.~834, Theorem 7]{oberwolfach_2020},
and also in \cite[paragraph following Theorem 6.1]{con_lambda_plus_2}:\footnote{The proof presented here  uses a 2021 result of Gabe Goldberg
at one point, to get around a gap in the original proof.} 

\begin{tm}
 Assume $\ZF(j)$ and that there is a Reinhardt cardinal
 as witnessed by $j$. Then $M_n^\#(X)$ exists and is fully iterable \tu{(}above $X$\tu{)} for every transitive set $X$ and every $n<\om$.
\end{tm}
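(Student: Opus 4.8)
The plan is to fix an elementary $j\colon V\to V$ with $\kappa=\crit(j)$ witnessing the Reinhardt cardinal, write $\langle\kappa_m\rangle_{m<\om}$ for its critical sequence and $\lambda=\sup_m\kappa_m$, and then proceed in three stages: first produce $M_n^\#(x)$ for every real $x$ and every $n$; then observe that this situation is preserved under set forcing; and finally reach an arbitrary transitive set $X$ by collapsing $\trcl(X)$ to be countable and pulling the resulting mouse back to $V$ by homogeneity, obtaining full iterability above $X$ along the way. I expect the genuinely delicate point to be not any one stage in isolation but the fact that all of it is fine-structural inner model theory carried out in $\ZF$, without wellorderings of the reals and without $\DC$.

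For the first stage, I would observe that $\kappa$ is superstrong in $V$ --- the $(\kappa,\beta)$-extenders derived from $j$ for $\beta<\lambda$ witness this (and a great deal more) --- so $\kappa$ is a Woodin cardinal and a limit both of Woodin cardinals and of measurable cardinals. Hence for each $n<\om$ the set $V_\kappa$ contains $n$ Woodin cardinals with a measurable above, which is the configuration needed to run the backgrounded construction of Neeman (building on Mitchell--Steel and Steel's $K^c$), and, crucially, the extenders witnessing Woodinness and measurability are available in $\ZF$. Carrying out that construction, by induction on $n$, yields for every real $x$ the premouse $M_n^\#(x)$ together with its $(\om_1+1)$-iteration strategy, which at each limit stage picks the unique cofinal branch with the correct $Q$-structure; such branches exist because of the Woodin cardinals. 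Thus $M_n^\#(x)$ exists and is iterable for every real $x$ and every $n<\om$; equivalently, $\bfPi^1_{n+1}$-determinacy holds for all $n$.

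For the remaining two stages, I would first use that ``$M_n^\#(x)$ exists and is $(\om_1+1)$-iterable for every real $x$'' is preserved by set forcing: the mice persist into any set-generic extension, their $Q$-guided strategies extend canonically since $Q$-structure guidance is absolutely definable, and reals of the extension are captured by genericity iterations of countable iterates. Now let $X$ be transitive and let $g$ be $V$-generic for $\Coll(\om,\trcl(X))$. In $V[g]$ the set $X$ is coded by a real $x$, and by the preservation just noted $M_n^\#(x)$ exists and is $(\om_1+1)$-iterable there via its canonical $Q$-guided strategy; reorganizing it over $X$ gives $M_n^\#(X)$ together with its canonical $Q$-guided strategy in $V[g]$. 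But $M_n^\#(X)$ and that strategy are definable from $X$ alone and do not depend on $g$, while $\Coll(\om,\trcl(X))$ is weakly homogeneous, so both already lie in $V$; and since $Q$-structure guidance is definable and local, the strategy is total on all set-length --- indeed $\Ord$-length --- iteration trees above $X$, i.e.\ $M_n^\#(X)$ is fully iterable above $X$ in $V$. This gives the theorem for every transitive $X$ and every $n<\om$.

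The main obstacle, as flagged above, is that the comparison arguments of the first stage and the verification that the iteration strategies are \emph{total} rather than merely partial are exactly the places where the choiceless setting bites; this is the gap in the originally announced proof. I would bridge it by invoking a 2021 theorem of Goldberg at the single step where $\ZF(j)$ alone does not deliver enough choice. With that step in place, the remaining verifications --- superstrength of $\kappa$ and the large-cardinal configurations it produces below it, the survival of the Mitchell--Steel, Martin--Steel and Neeman machinery in $\ZF$ given background extenders, and the homogeneity and absoluteness arguments of the last two stages --- should be routine.
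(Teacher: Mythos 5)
Your proposal has a fundamental gap right at the outset, and it is exactly the point you flag as ``delicate'' and then wave past. In a Reinhardt model $V$ choice fails badly (Kunen's theorem rules out a Reinhardt under $\ZFC$), and the Mitchell--Steel / Steel ($K^c$) / Neeman backgrounded constructions, the comparison lemma, and the iterability proofs for $K^c$ all live squarely in $\ZFC$. You cannot ``carry out that construction'' in $V$ from the large cardinals in $V_\kappa$: there is no known way to run the requisite comparison and realizability arguments without choice, and the paper itself explicitly disclaims knowing how to do this (see the footnote in the proof of Theorem \ref{tm:Mnsharp}: the author does not know how to adapt the core model theory of \cite{Kwithoutmeas} to $\ZF$ or $\ZFR$). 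So your first stage --- ``for every real $x$, build $M_n^\#(x)$ in $V$ by backgrounded construction'' --- does not go through, and with it the rest of the plan collapses, since stages two and three merely transport the output of stage one. Also, backgrounded constructions do not directly yield $X^\#$ for an arbitrary set $X$; even the base case (every set has a sharp) is nontrivial in $\ZFR$ and in the paper it is proved via a derived-extender and non-amenability argument (Lemma \ref{lem:j_rest_OR_non-amenable}), not via an inner model construction.

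The paper takes a genuinely different route that is designed exactly to avoid the missing choice. All fine-structural constructions are performed not in $V$ but inside $\HOD_C(X)[g]$ for suitable classes $C$ and $\Coll(\om,X)$-generics $g$, where $\ZFC$ holds, and the results are pulled back to $V$ by homogeneity of the collapse and absoluteness of the $Q$-structure-guided strategies. The inductive step is then a core-model-induction \emph{by contradiction}: if $M_{n+1}^\#(V_\eta)$ did not exist, one would build, inside $H[g]$, the Jensen--Steel core model $K$ (relative to the $M_n^\#$-operator, which is total by the inductive hypothesis), together with a very sound witness $Y$ definable from ordinal parameters; the Reinhardt embedding $j$ then restricts to $Y$, and by analyzing derived extenders, factor maps, and a suitable phalanx comparison, one produces an extender witnessing a superstrong at $\crit(k_Y)$ in $Y_1$, contradicting the initial segment condition. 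Your invocation of Goldberg's 2021 result is also misplaced: in the paper it is used at one specific point, namely to find a cardinal $\Omega$ that is \emph{measurable} (hence regular) in $V$ so that the $K$-theory of \cite{Kwithoutmeas} can be set up, not as a patch for a general breakdown of choice in the comparison arguments. As written, your proposal does not prove the theorem.
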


Recall here that $M_n(X)$ is the canonical proper class inner model containing $X$
and having $n$ Woodin cardinals $\delta>\rank(X)$,
and $M_n^\#(X)$ is its sharp. And the theory $\ZF(j)$ is basically ZF with an extra symbol $j$; it is described in detail below.

Well known results give the following corollary:

\begin{cor}
 Assume $\ZF(j)$ and that there is a Reinhardt cardinal
 as witnessed by $j$. Let $V[G]$ be any set generic extension of $V$. Then $V[G]$ satisfies Projective Determinacy.
\end{cor}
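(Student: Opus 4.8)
The plan is to derive the corollary from the Theorem together with the classical derivations of projective determinacy from iterable mice and the (by now routine) generic absoluteness of the $M_n^\#$-operators. By results of Martin, Harrington, Steel, Woodin and Neeman, the implication
\[\text{``\,$M_{n-1}^\#(x)$ exists and is $\om_1$-iterable for every real $x$\,''}\ \Longrightarrow\ \text{``\,$\bfPi^1_n$-determinacy holds\,''}\]
is provable in $\ZF+\DC_{\RR}$. Since $\mathsf{PD}$ is the assertion that $\bfPi^1_n$-determinacy holds for every $n<\om$, it therefore suffices to show that in $V[G]$, for every $n<\om$ and every real $x$, $M_n^\#(x)$ exists and is $\om_1$-iterable. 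Note that we do \emph{not} get to assume $\ZF(j)$, or the existence of a Reinhardt cardinal, in $V[G]$, since the forcing may destroy $j$; so we cannot simply re-run the Theorem inside $V[G]$, and instead must transfer the conclusion of the Theorem --- which is established in $V$ --- up to $V[G]$.

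So, working in $V$, fix a poset $\PP$, a $\PP$-generic filter $G$, a natural number $n$, and a real $x\in V[G]$. Choose a transitive set $Y\in V$ with $\PP\in Y$ and $x\in Y[G]$ --- for instance $Y=(V_\theta)^V$ for any large enough $\theta$, so that $Y[G]=(V_\theta)^{V[G]}$. By the Theorem, $\Nn:=M_n^\#(Y)$ exists in $V$ and is fully iterable above $Y$. Since $\PP\in Y\sub\Nn$, the filter $G$ is $\PP$-generic over $\Nn$, so $\Nn[G]$ is defined, $x\in Y[G]\sub\Nn[G]$, and --- after the usual reorganization --- $\Nn[G]$ is a $Y[G]$-premouse whose extender sequence is that of $\Nn$ (every extender of $\Nn$ has critical point above $\rank(Y)\ge\rank(\PP)$ and so is untouched by the forcing). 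Moreover each Woodin cardinal $\delta$ of $\Nn$ exceeds $\rank(Y)$ and is inaccessible in $\Nn$, so forcing with $\PP$ is small below $\delta$ and $\delta$ remains Woodin in $\Nn[G]$; thus $\Nn[G]$ has $n$ Woodin cardinals above $\rank(Y[G])=\rank(Y)\ge\rank(x)$.

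It remains to check that (i) $\Nn[G]$ is $\om_1$-iterable in $V[G]$, and (ii) from $\Nn[G]$ one recovers, inside $V[G]$, that $M_n^\#(x)$ exists and is $\om_1$-iterable. For (i) I would argue that the unique iteration strategy for $\Nn$ in $V$ is guided by $Q$-structures lying below the Woodin cardinals of $\Nn$: inductively on $n$, these $Q$-structures are $M_{n-1}^\#$-guided, hence absolutely iterable by the case $n-1$, so the correct branch of every relevant limit tree is singled out by an absolute first-order property of the putative $Q$-structures; this presentation of the strategy continues to define an iteration strategy in $V[G]$, and since every extender used on $\Nn[G]$ has critical point above $|\PP|$, an iteration tree on $\Nn[G]$ in $V[G]$ reduces to one on $\Nn$ (with $G$ carried along passively), so the extended strategy genuinely handles the trees that are new in $V[G]$. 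For (ii) I would run a $P$-construction over $x$ inside $\Nn[G]$ --- legitimate since $x\in\Nn[G]$ and $\Nn[G]$ carries $n$ Woodin cardinals above $\rank(x)$ --- to obtain the minimal $x$-premouse with $n$ Woodin cardinals $>\rank(x)$ that is $\om_1$-iterable in $V[G]$; by minimality and uniqueness this premouse is $M_n^\#(x)$. Having verified in this way the hypothesis of the displayed implication in $V[G]$ for all $n$ and all reals, I conclude $V[G]\sats\mathsf{PD}$.

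The single genuinely non-routine point here is the iterability transfer in step (i): confirming that the $V$-iteration strategy for $\Nn$ lifts to a strategy in $V[G]$ covering the iteration trees that are new in $V[G]$. This is, however, precisely the by-now-standard generic absoluteness of $M_n^\#$-iterability --- established via the $Q$-structure and $P$-construction machinery familiar from core model induction arguments --- so in a final write-up I would simply cite it. (One should also double-check that whatever fragment of $\DC$ the mice-to-determinacy step requires survives to $V[G]$; given how choice-rich the hypothesis $\ZF(j)$ plus a Reinhardt cardinal is, this ought to be routine, but it deserves a remark.)
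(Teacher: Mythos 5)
Your proposal is correct and takes essentially the route the paper has in mind: transfer closure under the $M_n^\#$-operators from $V$ to $V[G]$ (the paper packages precisely this as Lemma~\ref{lem:M_n^sharp_basic_closure}(1), proved by the same $Q$-structure/absoluteness argument you sketch), and then invoke the classical Martin--Harrington--Steel--Woodin--Neeman transfer theorems to get $\bfPi^1_n$-determinacy level by level. The paper gives no explicit proof of the corollary, just citing ``well known results'', and your argument is exactly a fleshing-out of those results; your closing caveat about checking the amount of $\DC$ needed in $V[G]$ is a legitimate point to keep in mind given the choiceless setting.
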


\begin{dfn}
The \emph{language of set theory with predicate}
$\Ll_{\dot{\in},\dot{j}}$ is the first order language with binary
predicate symbols
$\dot{\in}$ and $\dot{=}$ and predicate symbol $\dot{j}$.

The theory \emph{$\ZF(j)$} is the theory in 
$\Ll_{\dot{\in},\dot{j}}$
with all $\ZF$ axioms (with $\dot{\in}$ being membership and $\dot{=}$ equality), allowing all 
formulas of $\Ll_{\dot{\in},\dot{j}}$
for the Separation and Collection schemes.

The theory $\ZFR$ is the $\ZF(j)$ together 
 with the (single) axiom asserting that
\[ \text{``}\dot{j}:(V,\dot{\in})\to(V,\dot{\in})\text{ is 
}\Sigma_1\text{-elementary.''}\footnote{One could
add the axiom scheme, where for each 
$n\in\NN$, we have the assertion that $\dot{j}$
is $\Sigma_n$-elementary, but we will see that
this already follows.}\qedhere
\]
\end{dfn}

\subsection{Acknowledgments}

The author thanks the organizers of the Oberwolfach conference 2020, and for the opportunity to announce this and related work in the abstract   for the conference. (The actual conference was unfortunately cancelled.)

This work was funded by the Deutsche 
Forschungsgemeinschaft (DFG, German Research
Foundation) under Germany's Excellence Strategy EXC 2044-390685587,
Mathematics M\"unster: Dynamics-Geometry-Structure.

\section{Some background}\label{sec:Reinhardt_sharps}

\begin{dfn}
 Let $j:V_\delta\to V_\delta$ or $j:V\to V$ be $\Sigma_1$-elementary
 and $\kappa_0=\crit(j)$ and $\kappa_{n+1}=j(\kappa_n)$.
 Let $\lambda=\sup_{n<\om}\kappa_n$.
(Note that $\lambda\leq\delta$.)
 We write 
$\kappa_{n,j}=\kappa_n$ and $\lambda_j=\kappa_{\om,j}=\lambda$.
\end{dfn}

\begin{dfn}
Given a structure $M=(\univ{M},\in^M,\ldots)$ with universe $\univ{M}$,
and given $A\sub\univ{M}$, we say that $A$ is \emph{amenable} to $M$
iff for each $x\in\univ{M}$, we have
\[  
\{y\in\univ{M}\bigm|y\in A\text{ and }y\in^M x\}\in\univ{M}. \]

A \emph{class} of a model $M$ of $\ZF$ 
is a collection $X\sub M$ such that $X$ is amenable to $M$
and $(M,X)\sats\ZF$.
(Here we must be working in some background model which sees
$M$ and collections $X\sub M$, where we make this definition.)
\end{dfn}

\begin{dfn}\label{dfn:HOD_C(X)} Work in $\ZF$.
Let $C$ be a class. Then $\OD_C(X)$
denotes the class of all sets $y$ such that $y$
is definable from the predicate $C$
and parameters in $\OR\cup X$.
And $\HOD_C(X)$ denotes the class of all $y$
such that $\trcl(\{y\})\sub\OD_C(X)$.
And $\OD(X)=\OD_\emptyset(X)$ and $\HOD(X)=\HOD_\emptyset(X)$.
\end{dfn}

We give the relevant instance of Vopenka forcing:
\begin{lem}
 Assume $\ZF$. Then for every set $x$, $\HOD_{\{x\}}$
 is a set-generic extension of $\HOD$.
\end{lem}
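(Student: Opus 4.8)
The plan is to run Vopěnka's forcing construction inside $\HOD$: one fixes a Vopěnka algebra $\BB\in\HOD$, defines the canonical generic $G$ from $x$, and verifies $\HOD_{\{x\}}=\HOD[G]$. In detail, first fix $\alpha=\rank(x)$, so $x\sub V_\alpha$, and work for the moment in $\HOD$. Using the canonical definable well-ordering of the class $\OD$, let $\langle A_\xi:\xi<\theta\rangle$ be the resulting enumeration of all $\OD$ subsets of $\pow(V_\alpha)$; this enumeration is definable from the ordinal $\alpha$, and $\theta$ is an ordinal by Replacement. Let $\BB$ be the Boolean algebra of all $\OD$ subsets of $\pow(V_\alpha)$ ordered by $\sub$, transported along $\xi\mapsto A_\xi$ to a Boolean algebra whose underlying set is the ordinal $\theta$ (its zero being the index of $\emptyset$). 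Then the underlying set of $\BB$ is an ordinal and its ordering and Boolean operations are ordinal-definable sets of tuples of ordinals, hence belong to $\HOD$; so $\BB\in\HOD$. (One checks $\BB$ is even a \emph{complete} Boolean algebra in $\HOD$, since an $\OD$-indexed union of $\OD$ sets is $\OD$, but this is not needed.) Note the enumeration $\xi\mapsto A_\xi$ itself is \emph{not} in $\HOD$, since the $A_\xi$ typically have non-$\OD$ members; only $\BB$ must lie in $\HOD$.

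Next, back in $V$, set $G=\{\xi<\theta:x\in A_\xi\}$; the claim is that $G$ is $\BB$-generic over $\HOD$. It is a filter omitting the zero of $\BB$: if $\xi,\eta\in G$ then $A_\xi\inter A_\eta$ is a nonempty $\OD$ subset of $\pow(V_\alpha)$, so equals $A_\zeta$ for some $\zeta$ with $\zeta\in G$ and $\zeta\le_\BB\xi,\eta$, while $x\notin\emptyset$. For genericity, let $D\in\HOD$ be dense and put $U=\bigun_{\xi\in D}A_\xi$, an $\OD$ set. If $U\ne\pow(V_\alpha)$, then $\pow(V_\alpha)\setminus U$ is a nonempty $\OD$ subset of $\pow(V_\alpha)$, hence equals $A_\eta$ for some $\eta$, and by density there is $\xi\in D$ with $A_\xi\sub A_\eta$; but $\emptyset\ne A_\xi\sub U$ together with $A_\xi\sub\pow(V_\alpha)\setminus U$ forces $A_\xi=\emptyset$, a contradiction. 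So $U=\pow(V_\alpha)\ni x$, yielding $\xi\in D$ with $x\in A_\xi$, i.e. $\xi\in G\inter D$. Thus $\HOD[G]$ is a set-generic extension of $\HOD$, via $\BB\in\HOD$.

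It then remains to identify $\HOD[G]$ with $\HOD_{\{x\}}$. For $\HOD[G]\sub\HOD_{\{x\}}$: the enumeration $\xi\mapsto A_\xi$ is ordinal-definable, so $G$ is ordinal-definable from $x$, hence $G\in\OD_{\{x\}}$, and being a set of ordinals $G\in\HOD_{\{x\}}$; since $\HOD\sub\HOD_{\{x\}}$ and $\HOD_{\{x\}}\sats\ZFC$, the forcing extension $\HOD[G]$ is contained in $\HOD_{\{x\}}$. For the reverse inclusion it suffices to show every set of ordinals of $\HOD_{\{x\}}$ lies in $\HOD[G]$, because both $\HOD_{\{x\}}$ and $\HOD[G]$ are transitive proper-class models of $\ZFC$ — in particular $\HOD_{\{x\}}\sats\AC$, having an $x$-definable well-ordering — and any such model is determined by its sets of ordinals, an arbitrary element being recovered as the (absolute) transitive collapse of a well-founded extensional relation coded on an ordinal. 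So let $b\sub\gamma$ with $b\in\HOD_{\{x\}}$, hence $b\in\OD_{\{x\}}$, say $b=\{\xi<\gamma:\varphi(\xi,\vec\beta,x)\}$ for a formula $\varphi$ and ordinals $\vec\beta$. Here is the heart of the matter: for $\xi<\gamma$ let $B_\xi=\{z\in\pow(V_\alpha):\varphi(\xi,\vec\beta,z)\}$; then $B_\xi$ is \emph{ordinal}-definable — the parameter $x$ has been replaced by the bound variable $z$ — so $B_\xi\in\BB$, with some index $e(\xi)<\theta$. The map $e:\gamma\to\theta$ is ordinal-definable, hence, being a set of pairs of ordinals, $e\in\HOD$, and for every $\xi<\gamma$ we have $\xi\in b\iff x\in B_\xi\iff e(\xi)\in G$. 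Therefore $b=\{\xi<\gamma:e(\xi)\in G\}\in\HOD[G]$, and we conclude $\HOD_{\{x\}}=\HOD[G]$.

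The main obstacle is the definability bookkeeping rather than any one hard idea: one must be careful that the Vopěnka algebra $\BB$, realized with an ordinal as underlying set, genuinely lies in $\HOD$ (while the enumeration $\xi\mapsto A_\xi$ does not), and, dually, that in the last step the sections $B_\xi$ of the formula defining $b$ are ordinal-definable rather than merely $\OD_{\{x\}}$ — it is exactly this dropping of the parameter $x$ in favour of a bound variable that places $B_\xi$ inside $\BB$ and makes the index function $e$ an element of $\HOD$. One must also observe that the reduction to sets of ordinals is harmless in the choiceless ambient theory, which it is because $\HOD_{\{x\}}$ itself satisfies $\ZFC$.
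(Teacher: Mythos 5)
Your proposal is correct and follows essentially the same route as the paper: Vop\v{e}nka's forcing coded as an ordinal Boolean algebra in $\HOD$, the canonical generic $G$ determined by $x$, and the observation that the ``sections'' $B_\xi$ of the defining formula are $\OD$ (not merely $\OD_{\{x\}}$), which places the index map in $\HOD$ and recovers each set of ordinals of $\HOD_{\{x\}}$ inside $\HOD[G]$. The only differences are cosmetic (you use $\OD$ subsets of $\pow(V_\alpha)$ where the paper uses $\OD$ subsets of $V_\alpha$, and you spell out the genericity and the reduction to sets of ordinals in more detail).
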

\begin{proof}
 Let $\alpha\in\OR$ be such that $x\in V_\alpha$.
Let $\Vop^*$ be the partial order 
consisting of $\OD$
 subsets of $V_\alpha$, with $p\leq q\iff p\sub q$.
 Let $\Vop\in\HOD$ be the natural coding
 of $\Vop^*$ as a subset of some $\beta\in\OR$.
 Write $p\mapsto p^*$ for the natural bijection
 $\Vop^*\to\Vop$. Note that this is $\OD$.
 
 Let $G_x=\{p\in\Vop\bigm|x\in p^*\}$.
 Then by the usual Vopenka proof, $G_x$ is $(\HOD,\Vop)$-generic.
 We claim that $\HOD[G_x]=\HOD_{\{x\}}$.
 For clearly $\HOD[G_x]\sub\HOD_{\{x\}}$.
 So let $A\sub\eta\in\OR$ with $A\in\HOD_{\{x\}}$.
 Let $\varphi$ be a formula and $\xi\in\OR$ such that
 \[ A=\{\gamma<\eta\bigm|\varphi(x,\xi,\gamma)\}.\]
For $y\in V_\alpha$ let
 \[ A_y=\{\gamma<\eta\bigm|\varphi(y,\xi,\gamma)\}.\]
 Let $B_\gamma\in\Vop$ be the condition such that
 \[ B^*_\gamma=\{y\in V_\alpha\bigm|\gamma\in A_y\}=\{y\in 
V_\alpha\bigm|\varphi(y,\xi,\gamma)\},\]
which is an $\OD$ subset of $V_\alpha$.
Note that $\left<B_\gamma\right>_{\gamma<\eta}\in\HOD$.
Now note that
\[ \gamma\in A\iff B_\gamma\in G_x, \]
so $A\in\HOD[G_x]$, as desired.
\end{proof}

\begin{lem}Assume $(V,j)\sats\ZFR$. Then for no set $X$
 is $j\rest\OR$ amenable to $\HOD(X)$.
\end{lem}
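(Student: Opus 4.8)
We argue toward a contradiction, from the assumption that some set $X$ has $e:=j\rest\OR$ amenable to $\HOD(X)$. \emph{The heart of the matter is the case $\HOD(X)=\HOD$} (e.g.\ $X$ a set of ordinals). Since $\HOD=\HOD(\emptyset)$ is $\emptyset$-definable, $j(\emptyset)=\emptyset$, and $j$ is $\Sigma_1$-elementary, hence (as noted above) fully elementary, $j$ maps $\HOD$ into $\HOD$ and $k:=j\rest\HOD\colon\HOD\to\HOD$ is a nontrivial elementary embedding with $\crit(k)=\kappa_{0,j}$. The plan is then to show $k$ is amenable to $\HOD$: fixing $\alpha$, each $u\in V_\alpha^{\HOD}$ has (since $\HOD\models{}$``$V=\HOD$'') a canonical ordinal-parameter definition over $\HOD$, computed inside $\HOD$; by elementarity of $k$, which acts on ordinals as $e$, the value $k(u)$ is the element of $\HOD$ with the same definition, its ordinal parameters replaced by their $e$-images. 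Collecting these parameters over $u\in V_\alpha^{\HOD}$, Replacement in $\HOD$ yields $\gamma$ so that $e\rest\gamma$ records every needed value of $e$; and $e\rest\gamma\in\HOD$ by amenability, hence $e\rest\gamma\in\OD^{\HOD}$ as $\HOD\models{}$``$V=\HOD$''. So $k\rest V_\alpha^{\HOD}$ is $\OD^{\HOD}$-definable, i.e.\ $k\rest V_\alpha^{\HOD}\in\HOD$. Thus $\HOD$, a model of $\ZFC$, carries a nontrivial elementary self-embedding of the universe, contradicting Kunen's inconsistency theorem. (The hypothesis enters precisely at ``$e\rest\gamma\in\HOD$''.)

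\emph{For an arbitrary set $X$}, I would first reduce to a $j$-fixed transitive parameter: as $j$ is increasing on $\OR$, the ordinal $\bar\lambda:=\sup_{n<\om}e^{n}(\rank(X)+1)$ is a fixed point of $j$, so $j(V_{\bar\lambda})=V_{\bar\lambda}$; since $X\in V_{\bar\lambda}$ we have $\HOD(X)\sub\HOD(V_{\bar\lambda})=:N$, and $e$ remains amenable to $N$ (for $z\in N$, $e\cap z=(e\rest\gamma)\cap z$ for suitable $\gamma$, with $e\rest\gamma\in\HOD(X)\sub N$). Writing $Y:=V_{\bar\lambda}$, transitive with $Y\in N$ and $j(Y)=Y$, the map $k:=j\rest N\colon N\to N$ is again a nontrivial elementary embedding. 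Since $N$ need not satisfy choice, Kunen's theorem is not directly available in $N$; so I would pass to a choiceful model that $k$ preserves, namely $W:=\HOD^{N}$: it models $\ZFC$, is $\emptyset$-definable over $N$, and hence $k$ maps $W$ into $W$, giving a nontrivial elementary embedding $\ell:=k\rest W\colon W\to W$ of a model of $\ZFC$.

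\emph{The main obstacle is to make $\ell$ amenable to $W$}; once that is in hand, Kunen's theorem inside $W$ finishes the proof, since $(W,\ell)$ would be a model of $\ZFC$ carrying a nontrivial elementary self-embedding of $V$. The reconstruction of the first case does not transfer: elements of $N=\HOD(Y)$ require parameters from $Y$, and $j\rest Y=j\rest V_{\bar\lambda}$ — unlike the ordinal part $j\rest\OR$ — is not determined by $e$ and need not be amenable to $W$; correspondingly the sets $e\rest\gamma$ witnessing instances of $\ell$ lie in $N$ but not evidently in $W=\HOD^{N}$. Getting past this requires genuinely new input on Reinhardt embeddings — either structural constraints on $\HOD$ and its relativizations under a Reinhardt, or a homogeneity/stability argument forcing the restrictions $\ell\rest V_\alpha^{W}$ into $W$; this is the point at which Goldberg's theorem is to be brought to bear. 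With $\ell$ amenable to $W$, Kunen's inconsistency inside $W$ delivers the contradiction and completes the argument.
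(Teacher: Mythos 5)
Your special case ($\HOD(X)=\HOD$) is correct: from amenability of $e=j\rest\OR$ to $\HOD$ and the canonical $\OD$-wellordering, you recover $k\rest V_\alpha^{\HOD}\in\HOD$ and contradict Kunen directly. But for general $X$ your argument has a genuine gap, which you correctly identify and cannot close: enlarging to $N=\HOD(V_{\bar\lambda})$ means elements of $N$ need parameters from $V_{\bar\lambda}$, and $e\rest\gamma\in N$ gives no reason to think $e\rest\gamma\in\HOD^N$ (in $N$ we do not have $V=\HOD$), so amenability of $\ell$ to $W=\HOD^N$ is not forthcoming. Passing to the larger parameter $V_{\bar\lambda}$ also moves away from the fix, and the input you anticipate is misattributed: Goldberg's measurable-cardinal theorem enters elsewhere in the paper, not here.

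The paper's route is genuinely different and avoids the amenability-to-$\HOD^N$ requirement entirely. First, shrink the parameter rather than enlarge it: since $j\rest\alpha\in\HOD(X)$ for every $\alpha$, each such segment is definable from a finite tuple of $X$-parameters, and a pigeonhole over the set of finite tuples versus the proper class of $\alpha$'s yields a single $x$ with $j\rest\alpha\in\HOD_{\{x\}}$ for all $\alpha$ (so $e$ is amenable to $\HOD_{\{x\}}$). Second, use the preceding Vopenka lemma: $\HOD_{\{x\}}=\HOD[G_x]$ is a \emph{set-generic extension} of $\HOD$. Third, note that $e$ together with the $\HOD$-definable class wellordering (whose set-segments lie in $\HOD$) determines $j\rest\HOD\colon\HOD\to\HOD$ and makes it amenable to $\HOD[G_x]$, giving $(\HOD[G_x], j\rest\HOD)\models\ZFC$. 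The contradiction then comes from the \emph{generalized} Kunen inconsistency of \cite{gen_kunen_incon} (no nontrivial elementary self-embedding of an inner model amenable to a set-generic extension of that model), not from ordinary Kunen applied inside an inner model. In other words, the paper never tries to get the embedding's graph segments into $\HOD$ itself --- only into a set-forcing extension of $\HOD$ --- and uses a Kunen theorem strong enough for that weaker conclusion; that is precisely the piece your proposal is missing.
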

\begin{proof}
Suppose otherwise. Then since $j$ is proper class
and $j\rest\alpha$ is a set of ordinals for each $\alpha\in\OR$,
there is $x\in X$ such that $j\rest\alpha\in\HOD_{\{x\}}$
for proper class many $\alpha\in\OR$. But then $j\rest\OR$
is amenable to $\HOD_{\{x\}}$. But by the previous lemma,
$\HOD_{\{x\}}=\HOD[G_x]$ is a set-generic extension of $\HOD$.
But $j\rest\OR$ determines
\[ j\rest\HOD:\HOD\to\HOD, \]
through the standard class wellordering of $\HOD$,
and (since each of the set segments of this are in $\HOD$)
it follows that $(\HOD[G_x],j\rest\HOD)\sats\ZFC$.
This contradicts
\cite{gen_kunen_incon}.\end{proof}

\begin{lem}\label{lem:j_rest_OR_non-amenable}Assume $(V,j)\sats\ZFR$ and let 
$(N_\om,j_\om)$ be the $\om$th iterate of $(V,j)$.
Then $j\rest\OR$ is not amenable to $N_\om[A]$ for any set $A$.
\end{lem}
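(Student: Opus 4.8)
The plan is to argue by contradiction, reducing to the self-embedding $j_\om$ of $N_\om$ and then invoking the preceding lemma \emph{inside} $N_\om$. So suppose $j\rest\OR$ is amenable to $N_\om[A]$ for some set $A$. Recall the standard features of the $\om$-th iterate: writing $j_{0,\om}\colon V\to N_\om$ for the iteration embedding and $\langle j_{m,n}\rangle_{m\leq n\leq\om}$ for the iteration maps, $N_\om$ is a transitive proper class inner model with $\OR^{N_\om}=\OR$, we have $\crit(j_{0,\om})=\kappa_{0,j}$ and $j_{0,\om}(\kappa_{0,j})=\lambda_j$, the map $j_\om\colon N_\om\to N_\om$ satisfies $j_\om\circ j_{0,\om}=j_{0,\om}\circ j$ and $\crit(j_\om)=\lambda_j$, and — crucially, and this is the point at which the cited theorem of Goldberg enters — $(N_\om,j_\om)\sats\ZFR$.

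The first and main step is to show that $j_\om\rest\OR$ is itself amenable to $N_\om[A]$. The model $N_\om$ is amenable to, and uniformly definable in, $N_\om[A]$; and the assumption that $N_\om$ is the $\om$-th iterate of $(V,j)$ pins down each $j_{0,n}\rest\OR$, hence $j_{0,\om}\rest\OR$, from $j\rest\OR$. Then, since $\rg(j_{0,\om})\cap\OR$ is cofinal in $\OR$ and $j_\om$ is elementary (with its values on the gaps of $\rg(j_{0,\om})$ determined by elementarity), the relation $j_\om\circ j_{0,\om}=j_{0,\om}\circ j$ lets one compute each $j_\om\rest\alpha$ inside $N_\om[A]$ from $j\rest\OR$, $j_{0,\om}\rest\OR$ and $N_\om$. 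Thus $j_\om\rest\alpha\in N_\om[A]$ for every $\alpha$.

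Now I finish as in the proof of the preceding lemma, but working inside $N_\om$ with $j_\om$ in place of $j$: since $(N_\om,j_\om)\sats\ZFR$, that lemma, applied in $N_\om$, yields that $j_\om\rest\OR$ is not amenable to $\HOD^{N_\om}(X)$ for any set $X\in N_\om$. On the other hand, from ``$j_\om\rest\OR$ is amenable to $N_\om[A]$'' one extracts, by the Vopenka-style reflection used there (the Vopenka lemma above, applied in $N_\om$, using that $j_\om$ is a proper class of $N_\om$ while each $j_\om\rest\alpha$ is a set, and absorbing any set parameters into one $x$), that $j_\om\rest\alpha\in\HOD^{N_\om}_{\{x\}}=\HOD^{N_\om}(\{x\})$ for proper-class-many, hence all, $\alpha$; so $j_\om\rest\OR$ is amenable to $\HOD^{N_\om}(\{x\})$, a contradiction. (Equivalently, without routing through that lemma: $\HOD^{N_\om}(\{x\})=\HOD^{N_\om}[G_x]\sats\ZFC$, and via the canonical wellorder of $\HOD^{N_\om}$ the amenability of $j_\om\rest\OR$ gives a nontrivial elementary $j_\om\rest\HOD^{N_\om}\colon\HOD^{N_\om}\to\HOD^{N_\om}$ amenable to $\HOD^{N_\om}[G_x]$, contradicting the generalized Kunen inconsistency \cite{gen_kunen_incon}.)

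I expect the second paragraph to be the main obstacle: making precise that inside $N_\om[A]$ the ordinal parts of the iteration maps, and with them $j_\om\rest\OR$, really are reconstructible from $j\rest\OR$ and $N_\om$ alone; this is the delicate use of the structure theory of iterated Reinhardt embeddings (and where Goldberg's theorem is needed, both for this and to get $(N_\om,j_\om)\sats\ZFR$). A secondary point is the passage in the third paragraph from amenability to $N_\om[A]$ to amenability to $\HOD^{N_\om}(\{x\})$ for a single set $x$, which uses that $N_\om[A]$ is a set-generic — or otherwise suitably tame — extension of $N_\om$.
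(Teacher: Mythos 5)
You've taken a genuinely different route from the paper, and the two gaps you flag at the end are real; I don't see how to close them without essentially reverting to the paper's argument. In your second paragraph, the claim that $j_\om\rest\OR$ is amenable to $N_\om[A]$ given that $j\rest\OR$ is, is unjustified: the identity $j_\om\com j_{0,\om}=j_{0,\om}\com j$ determines $j_\om$ only on $\rg(j_{0,\om}\rest\OR)$, and for ordinals in the gaps elementarity does not pin down the value of $j_\om$ pointwise. Nor does $N_\om[A]$ obviously see the intermediate models $N_n$ or the maps $j_{n,\om}$, which is what one would actually need to reconstruct $j_\om$ coordinatewise through the direct limit. In your third paragraph there is a second gap: from ``$j_\om\rest\OR$ is amenable to $N_\om[A]$'' you want to reach ``$j_\om\rest\OR$ is amenable to $\HOD^{N_\om}(\{x\})$ for some $x\in N_\om$'', but $N_\om[A]$ is an outer model of $N_\om$, and in general $A\notin N_\om$; a generic subset of $\om$ over $N_\om$ lies in no $\HOD^{N_\om}(\{x\})$ with $x\in N_\om$, so there is no reason the sets $j_\om\rest\alpha$ should. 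The preceding lemma's Vopenka extraction used amenability to $\HOD(X)$ for a set $X$ \emph{of the model}, a strictly stronger hypothesis, and tameness of $N_\om\sub N_\om[A]$ does not bridge this.

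The paper's proof works with $j\rest\OR$ directly and never passes to $j_\om$. It first notes $j\rest\lambda_j\notin N_\om$, since $\lambda_j$ is inaccessible in $N_\om$ while $j\rest\lambda_j$ encodes the critical sequence $\langle\kappa_n\rangle_{n<\om}$, a cofinal $\om$-sequence in $\lambda_j$. It then shows that if $j\rest\OR$ is amenable to $N_\om[A]$, then $j\rest\alpha\in N_\om$ for \emph{every} $\alpha$: put $A\in N_\om[G]$ with $G$ $\PP$-generic over $N_\om$, choose $\xi$ large relative to $\PP\cross\alpha$, and use the $\PP$-forcing relation inside $N_\om$ to find $X\in N_\om$ with $X\sub\xi$, $\ot(X)=\eta\geq\alpha$, and $j\rest X\in N_\om$; since $j(N_\om)=N_\om$ gives $j\rest N_\om\colon N_\om\to N_\om$, the increasing enumeration $\pi\colon\eta\to X$ has $j(\pi)\in N_\om$, and from $j\com\pi=j(\pi)\com(j\rest\eta)$ one solves for $j\rest\eta\in N_\om$. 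This contradicts $j\rest\lambda_j\notin N_\om$. So the key idea you are missing is that one can exploit $j(N_\om)=N_\om$ together with a pigeonhole argument on the forcing relation to pull $j\rest\alpha$ \emph{into $N_\om$ itself}, rather than trying to run a Kunen-style inconsistency for $j_\om$ inside generic or $\HOD$-style extensions of $N_\om$.
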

\begin{proof} 
 We have $j\rest\lambda_j\notin N_\om$,
 since $\lambda_j$ is inaccessible in $N_\om$,
 but from $j\rest\lambda_j$ one can recover the critical sequence of $j$.
 So the following claim gives a contradiction, completing the proof:
 
\begin{clm} If there is some set $A$ such that $j\rest\OR$ is amenable to $N_\om[A]$,
then $j\rest\alpha\in N_\om$ for every $\alpha\in\OR$.\end{clm}
\begin{proof}
Suppose $j\rest\OR$ is amenable to $N_\om[A]$ for some set $A$.
By \cite[Theorem 3.15]{reinhardt_iterates} (or \cite[Lemma 3.9]{reinhardt_iterates} suffices here),
 we can fix a forcing $\PP\in N_\om$
 and an $(N_\om,\PP)$-generic $G$
 with $A\in N_\om[G]$. By our assumption,
 $j\rest\OR$ is amenable to $N_\om[G]$.
 
Fix $\alpha\in\OR$.
Let $\xi$ be the least ordinal such that in $V$,
$\xi$ is not the surjective
image of $\PP\cross\alpha$.
Then considering the $\PP$-forcing relation in $N_\om$
as in \cite[\S2]{reinhardt_iterates}, we can find 
$X\in N_\om$,
with $X\sub\xi$, such that $j\rest X\in N_\om$,
and $\eta=\ot(X)\geq\alpha$. Let $\pi:\eta\to X$
be the increasing enumeration of $X$, so $\pi\in N_\om$.
Now $j(N_\om)=N_\om$, so
\[ j\rest N_\om:N_\om\to N_\om, \]
so $j(\eta,X,\pi)=(j(\eta),j(X),j(\pi))\in N_\om$,
and $j(\pi):j(\eta)\to j(X)$ is the increasing enumeration
of $j(X)$. We have $j\com\pi=j(\pi)\com j\rest\eta$,
and $j\com\pi\in N_\om$ and $j(\pi)\in N_\om$, so $j\rest\eta\in N_\om$,
but $\eta\geq\alpha$, so we are done.
\end{proof}

With the claim, we have proven the lemma.
\end{proof}

\section{$M_n^\#(X)$}
The following is by \cite{iter_for_stacks}:

\begin{fact}
 Assume ZF. Let $n<\om$ and $X$ be a transitive set and suppose $M_n^\#(X)$
 exists and is $(0,\OR)$-iterable. Then $M_n^\#(X)$ is $(0,\OR,\OR)^*$-iterable.
\end{fact}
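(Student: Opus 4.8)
The plan is to establish this by upgrading a single iteration strategy for $M_n^\#(X)$ into one that works on stacks of normal trees, using the general machinery of \cite{iter_for_stacks}. Since that reference develops the framework for resurrecting, normalizing, and coherently reorganizing stacks into single normal trees, the task reduces to verifying that $M_n^\#(X)$ falls within the scope of that framework. First I would recall the relevant hypotheses: $M_n^\#(X)$ is an $X$-premouse, it is $n$-small above $X$, it is fine-structural and sound, and — crucially — by assumption it is $(0,\OR)$-iterable, i.e.\ it has a well-defined iteration strategy for all single normal trees of length at most $\OR$ played above $X$. The conclusion $(0,\OR,\OR)^*$-iterability means there is a strategy for stacks of length $\OR$, each component being a normal tree of length $\OR$, with the star indicating the appropriate condensation/coherence demands on the strategy (full normalization and strategy coherence across the stack).

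The key step is the \emph{normalization} argument. Given a putative stack $\vec{\Tt} = \langle \Tt_\alpha \rangle_{\alpha<\theta}$, one forms the full normalization $W = X^\infty(\vec{\Tt})$, a single normal tree, using the embedding-normalization and tree-normalization results from \cite{iter_for_stacks}. The given $(0,\OR)$-strategy $\Sigma$ can then be applied to $W$; pulling the branch choices back through the normalization maps yields branch choices for the components $\Tt_\alpha$. One then checks that this induced stack strategy is well-defined (independent of the bookkeeping), that it is closed under the operations needed to iterate stacks of stacks, and that the $*$-coherence conditions hold — these follow from the commutativity of the normalization maps with the copying/resurrection maps, exactly the content packaged in \cite{iter_for_stacks}. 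The point is that no new combinatorics specific to $X$-premice with $n$ Woodins is needed; $n$-smallness above $X$ guarantees that all relevant extenders used in the trees are below the least Woodin-like disagreement, so the general theory applies uniformly.

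The main obstacle is ensuring that the hypotheses of the cited theorem from \cite{iter_for_stacks} are met in the generality needed here — in particular that the theorem is stated for $X$-premice over an arbitrary transitive set $X$ (not just over $\emptyset$ or over a single real), and that the iterability is genuinely "above $X$" throughout, so that the normalization never disturbs the base $X$. One must also be slightly careful about the ZF (rather than ZFC) setting of the ambient theory, since \cite{iter_for_stacks} may be written under AC; however, the normalization and comparison arguments involved are absolute enough, and the objects in play ($M_n^\#(X)$ and its iteration trees of bounded length) are small enough relative to the ambient universe, that choice is not actually invoked in the construction of the stack strategy from the normal strategy. Once these points are checked, the fact follows immediately by quoting \cite{iter_for_stacks}; indeed the excerpt already signals this by prefacing the statement with "The following is by \cite{iter_for_stacks}", so the intended proof is essentially a citation together with the verification that $M_n^\#(X)$ meets the stated input conditions.
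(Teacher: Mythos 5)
Your overall approach matches the paper exactly: the Fact is proved by citing \cite{iter_for_stacks}, and you correctly identify normalization of stacks as the mechanism behind that result, together with the need to check that the theorem applies to $X$-premice above an arbitrary transitive $X$. The paper gives no proof beyond the citation, so your reconstruction of what is being cited is on target.

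Where you go slightly astray is in handling the ZF issue. You argue that ``choice is not actually invoked'' because the relevant objects are absolute and small; but that is not the actual obstruction, and the argument as you state it does not close the gap. The proof in \cite{iter_for_stacks} is not problematic in ZF because it needs to choose from small sets --- it is problematic because it appeals to the existence of sufficiently many regular cardinals (for instance to find cardinals past which reflection can be run and cofinal branches can be stabilized), and in a mere ZF universe there need be no uncountable regular cardinals at all. The paper's Remark addresses this directly: one executes the proof inside $L(A,X)$ for a suitable class $A$ of ordinals, where the required regular cardinals exist, and then transfers the conclusion back. So the fix is a localization to an inner model with more regularity, not an observation that choice is dispensable for small sets. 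You should replace the final ``absoluteness/smallness'' sentence with this localization argument; with that adjustment the proposal is exactly the paper's (one-line) proof plus its accompanying Remark.
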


\begin{rem}
 Note that one doesn't need to assume any choice
 for the result above. There is a use of the existence of sufficient regular cardinals involved in the proof in \cite{iter_for_stacks}, but that proof can be executed in $L(A,X)$ for some class $A$ of ordinals,
 where we get such things.
\end{rem}

And a standard comparison argument shows:
\begin{fact} Assume ZF. Let $n<\om$
 and $X$ a transitive set and suppose that $M_n^{\#}(X)$
 exists and is $(0,\OR)$-iterable.
 Then there is a unique $(0,\OR)$-strategy for $M_n^{\#}(X)$; we denote this by $\Sigma_{M_n^{\#}(X)}$.
\end{fact}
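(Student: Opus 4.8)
The plan is to run the standard branch‑uniqueness argument for iteration strategies of small mice. Suppose $\Sigma$ and $\Gamma$ are both $(0,\OR)$‑strategies for $M:=M_n^{\#}(X)$ and, towards a contradiction, that $\Sigma\neq\Gamma$. I would choose an iteration tree $\mathcal{T}$ of least possible length on which $\Sigma$ and $\Gamma$ disagree; then $\mathcal{T}$ has limit length, and, writing $b:=\Sigma(\mathcal{T})$ and $c:=\Gamma(\mathcal{T})$, we have $b\neq c$. Since $\Sigma$ and $\Gamma$ are winning for the iteration player, $\mathcal{M}^{\mathcal{T}}_b$ and $\mathcal{M}^{\mathcal{T}}_c$ are wellfounded; since $\mathcal{T}$ is above $X$, both are $X$‑premice in which $X$ is not moved, and the tail strategies $\Sigma_{\mathcal{T}\conc b}$ and $\Gamma_{\mathcal{T}\conc c}$ witness that each is $(0,\OR)$‑iterable above $X$. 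Finally $\mathcal{M}^{\mathcal{T}}_b$ and $\mathcal{M}^{\mathcal{T}}_c$ agree strictly below $\delta:=\delta(\mathcal{T})$, with common initial segment the common part model $\mathcal{M}(\mathcal{T})$, and $\delta$ is a cardinal of each.

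Next I would apply the branch‑uniqueness (``zipper'') lemma for normal iteration trees: since $b\neq c$ are distinct cofinal wellfounded branches, $\delta$ is Woodin in $\mathcal{M}^{\mathcal{T}}_b$ with respect to all of its subsets of $\delta$, and likewise in $\mathcal{M}^{\mathcal{T}}_c$; in particular neither $b$ nor $c$ drops. As $M$ has exactly $n$ Woodin cardinals, all $>\rank(X)$, and $\mathcal{T}$ is above $X$, so do $\mathcal{M}^{\mathcal{T}}_b$ and $\mathcal{M}^{\mathcal{T}}_c$; hence $\delta$ is the $i$‑th Woodin cardinal of each, for one and the same $i$ (determined by $\mathcal{M}(\mathcal{T})$, since $\mathcal{M}^{\mathcal{T}}_b|\delta=\mathcal{M}(\mathcal{T})=\mathcal{M}^{\mathcal{T}}_c|\delta$), and $i^{\mathcal{T}}_{0,b}$, $i^{\mathcal{T}}_{0,c}$ are continuous at the $i$‑th Woodin of $M$. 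Using the minimality of $M_n^{\#}(X)$ --- the part of it lying strictly above its $i$‑th Woodin is exactly $M_{n-i}^{\#}$ computed over the part lying at or below it --- together with the elementarity of $i^{\mathcal{T}}_{0,b}$ and $i^{\mathcal{T}}_{0,c}$ and the iterability of $\mathcal{M}^{\mathcal{T}}_b$, $\mathcal{M}^{\mathcal{T}}_c$ above $\delta$, I would conclude that, as $\mathcal{M}(\mathcal{T})$‑premice, $\mathcal{M}^{\mathcal{T}}_b$ and $\mathcal{M}^{\mathcal{T}}_c$ both equal $M_{n-i}^{\#}(\mathcal{M}(\mathcal{T}))$; call this common model $R$.

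Then $\mathcal{T}\conc b$ (a non‑dropping iteration of $M$ via $\Sigma$) and $\mathcal{T}\conc c$ (a non‑dropping iteration of $M$ via $\Gamma$) have the same last model $R$. Applying the Dodd--Jensen lemma for $\Sigma$, with the elementary embedding $i^{\mathcal{T}}_{0,c}\colon M\to R$, yields $i^{\mathcal{T}}_{0,b}(\xi)\leq i^{\mathcal{T}}_{0,c}(\xi)$ for every ordinal $\xi$; applying it for $\Gamma$ with $i^{\mathcal{T}}_{0,b}$ gives the reverse inequality, so $i^{\mathcal{T}}_{0,b}=i^{\mathcal{T}}_{0,c}$, and hence $b=c$ (distinct cofinal branches of a normal iteration tree have distinct branch embeddings) --- contradicting the choice of $\mathcal{T}$. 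I expect the main obstacle to be the middle step: verifying that both branch models coincide with $M_{n-i}^{\#}(\mathcal{M}(\mathcal{T}))$. This is where one uses the smallness of $M_n^{\#}(X)$ (it lies below $M_{n+1}^{\#}$, so the relevant condensation and comparison facts are available, and no Dodd--Jensen‑type anomaly intervenes) and, in the borderline case $i=0$, an extra argument noting that $\mathcal{T}$ is then essentially a tree below the least Woodin of $M$, together with an induction on $n$ to handle the $\mathcal{M}(\mathcal{T})$‑mice appearing above $\delta$. As in the preceding Fact, all the comparison and Dodd--Jensen arguments go through in $\ZF$, carried out inside a suitable $L(A,X)$.
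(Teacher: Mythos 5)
Your overall strategy (localize the first disagreement, get two distinct cofinal wellfounded branches, use comparison) is the right shape, but two steps are off.

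First, the assertion that $\mathcal{M}^\Tt_b$ and $\mathcal{M}^\Tt_c$ both equal $M_{n-i}^{\#}(\mathcal{M}(\Tt))$ cannot be correct as stated. Along a non-dropping branch of a $(0,\OR)$-tree above $X$, the last model $\mathcal{M}^\Tt_b$ remains $\om$-sound with $\rho_\om(\mathcal{M}^\Tt_b)=\rank(X)$, which is strictly below $\delta(\Tt)$; whereas $M_{n-i}^{\#}(\mathcal{M}(\Tt))$ is $\om$-sound \emph{as an $\mathcal{M}(\Tt)$-premouse}, projecting to the top of $\mathcal{M}(\Tt)$, i.e.\ to $\geq\delta(\Tt)$. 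So these are different structures. The object one actually compares is not the whole branch model but the Q-structure $Q(b,\Tt)\ins\mathcal{M}^\Tt_b$ for $\delta(\Tt)$.

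Second, the concluding appeal to Dodd--Jensen is circular. The Dodd--Jensen lemma (or even its weak form) applies to a strategy that already has the Dodd--Jensen property; for a carefully constructed strategy on a countable mouse this can be arranged, but an \emph{arbitrary} $(0,\OR)$-strategy is not known a priori to have it --- indeed for premice like $M_n^{\#}(X)$, having the Dodd--Jensen property is essentially equivalent to being \emph{the} (unique) strategy, which is exactly what is under proof. So Dodd--Jensen cannot be the engine that produces uniqueness. Relatedly, your claim that distinctness of $b,c$ plus Woodinness of $\delta(\Tt)$ forces $b,c$ not to drop is asserted without argument; the standard route does not need this and is insensitive to drops.

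The ``standard comparison argument'' the paper has in mind goes through Q-structures and Steel's branch uniqueness (zipper) lemma: at the first disagreement, form $Q(b,\Tt)\ins\mathcal{M}^\Tt_b$ and $Q(c,\Tt)\ins\mathcal{M}^\Tt_c$. These exist here (both branch models project below $\delta(\Tt)$), are $\delta(\Tt)$-sound, $(n+1)$-small and iterable above $\delta(\Tt)$ via the tail strategies, so their comparison (which terminates by smallness, with no Q-structure conflicts) yields $Q(b,\Tt)=Q(c,\Tt)$. Then the zipper lemma gives $b=c$ directly: a common Q-structure for $\delta(\Tt)$ is compatible with exactly one cofinal wellfounded branch. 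This avoids both problematic steps above.
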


\begin{lem}\label{lem:M_n^sharp_basic_closure}
 Assume ZF. Let $n<\om$, and suppose that for every transitive set $X$, $M_n^\#(X)$ exists and is $(0,\OR)$-iterable. Let $G$ be any set-generic filter over $V$. Then:
 \begin{enumerate}\item $V[G]\sats$``For every transitive set $X$, 
 $M_n^\#(X)$ exists  and is $(0,\OR)$-iterable'',
  \item For all transitive $X\in V$, we have:
  \begin{enumerate}[label=--]\item 
  $(M_{n}^{\#}(X))^{V[G]}=(M_{n}^{\#}(X))^V$;
  let $N=(M_{n}^{\#}(X))^V$, 
  and \item $\Sigma_{N}^V=\Sigma^{V[G]}_N\rest V$.
  \end{enumerate}
  \item For any transitive set $Y$ and class $C$:
  \begin{enumerate}
  \item $\HOD_C(Y)\sats$``For all transitive sets $X$, $M_n^\#(X)$ exists and is $(0,\OR)$-iterable'',
  \item  for all transitive sets $X\in\HOD_C(Y)$,
  we have:
  \begin{enumerate}\item $(M_{n}^{\#}(X))^{\HOD_C(Y)}=(M_{n}^{\#}(X))^V$; let $N=(M_n^{\#}(X))^V$,
  \item $\Sigma_{N}^{\HOD_C(X)}=\Sigma_{N}^V\rest\HOD_C(Y)$.
  \end{enumerate}
   \end{enumerate}
   \end{enumerate}
\end{lem}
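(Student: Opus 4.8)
The plan is to prove items (1)--(3) simultaneously by strong induction on $n$. First note that the hypothesis at level $n$ implies the hypothesis at every level $m\leq n$: $M_m^\#(X)\ins M_n^\#(X)$, and an initial segment of a $(0,\OR)$-iterable premouse is $(0,\OR)$-iterable. So, when treating level $n$, I may use items (1)--(3) at all levels $m<n$. The engine of the argument is the standard fact that the unique $(0,\OR)$-strategy $\Sigma_N$ for $N=M_n^\#(X)$ is \emph{$Q$-structure guided}: on a limit-length tree $\Tt$ on $N$ via $\Sigma_N$, the branch $\Sigma_N(\Tt)$ is the unique cofinal wellfounded branch $b$ for which $Q(b,\Tt)$ exists and $Q(b,\Tt)\ins M_{n-1}^\#(\Mm(\Tt))$, where $\Mm(\Tt)$ is the common part model of $\Tt$. (For $n=0$ there is nothing to guide: every iteration tree on $M_0^\#(X)$ \emph{above} $X$ is a linear iteration of the images of the top total measure of $M_0^\#(X)$, so it automatically has wellfounded models and there is never a branch to choose. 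Hence the base case $n=0$ of all three items is immediate, since $M_0^\#(X)$, the linear iteration, and wellfoundedness are all absolutely computed by any transitive model of $\ZF$ containing $X$ -- whether an inner model or a generic extension -- and the strategy is trivially unique.) So fix $n\geq1$.

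\emph{Items (1) and (2).} Let $G$ be set-generic over $V$. Given transitive $X\in V$, put $N=(M_n^\#(X))^V$ and $\Sigma=\Sigma_N^V$. I would extend $\Sigma$ to a putative strategy $\Sigma^*$ for $N$ in $V[G]$ by the $Q$-structure recipe, using that for each transitive $\Mm\in V[G]$ the mouse $(M_{n-1}^\#(\Mm))^{V[G]}$ exists and is $(0,\OR)$-iterable (item (1) at level $n-1$), and using item (2) at level $n-1$ to see that on trees $\Tt\in V$ the $Q$-structure $Q(\cdot,\Tt)\ins M_{n-1}^\#(\Mm(\Tt))$, hence the branch it determines, is the same whether computed in $V$ or in $V[G]$; so $\Sigma^*\supseteq\Sigma$. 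Granting the standard facts that $N$ is guided by such $Q$-structures and that they cannot be ``fooled'' (uniqueness of the $Q$-structure-guided branch), $\Sigma^*$ is a genuine $(0,\OR)$-strategy in $V[G]$; then, ``$N$ is an active $X$-premouse minimal with $n$ Woodin cardinals'' being first-order over $N$ hence absolute, uniqueness of the mouse and of the strategy yield $N=(M_n^\#(X))^{V[G]}$, $\Sigma^*=\Sigma_N^{V[G]}$, and $\Sigma_N^V=\Sigma_N^{V[G]}\rest V$ -- this is item (2). For the remaining half of (1), given transitive $Y\in V[G]$ I would fix a $\PP$-name $\dot Y\in V$ for $Y$ (where $\PP\in V$ is the forcing), pick a limit $\theta$ with $\PP,\dot Y\in V_\theta$, and work over $Z=(V_\theta)^V[G]$, a transitive set in $V[G]$ with $Y\in Z$ and $\PP$ lying in its ``$V_\theta$-part''; then $P$, the $\PP$-generic extension of the $V_\theta$-premouse $(M_n^\#(V_\theta))^V$, is an active $Z$-premouse with $n$ Woodin cardinals above $\rank(Z)$ (small-forcing preservation of Woodinness), it is $(0,\OR)$-iterable in $V[G]$ by the argument just given with $V_\theta$ in place of $X$, and a standard relativization (e.g.\ running the $M_n^\#$-construction over $Y$ in $V[G]$, which cannot break down as its models compare with iterates of $P$) then yields $(M_n^\#(Y))^{V[G]}$.

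\emph{Item (3).} Let $W=\HOD_C(Y)$, an inner model of $\ZF$, and fix transitive $X\in W$; put $N=(M_n^\#(X))^V$, $\Sigma=\Sigma_N^V$. Since $N$ is definable from $X$ and hereditarily ordinal definable over $X$, while $X\in W$ gives $\trcl(\{X\})\sub\OD_C(Y)$, we get $N\in W$. I would show that $\Sigma\rest W$ is a \emph{total} $(0,\OR)$-strategy for $N$ in $W$: a limit-length tree $\Tt\in W$ built by $\Sigma\rest W$ is, by induction up the tree, a $\Sigma$-tree in $V$, so $b=\Sigma(\Tt)$ is the unique cofinal wellfounded branch with $Q(b,\Tt)\ins(M_{n-1}^\#(\Mm(\Tt)))^V$; by item (3)(b) at level $n-1$ applied in $W$ (note $\Mm(\Tt)\in W$), $(M_{n-1}^\#(\Mm(\Tt)))^V=(M_{n-1}^\#(\Mm(\Tt)))^W\in W$, so this $Q$-structure lies in $W$, and therefore so does $b$ -- the unique cofinal branch of $\Tt$ with that $Q$-structure, wellfoundedness of iterates being absolute for $W$. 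Hence $N$ is $(0,\OR)$-iterable in $W$; since $N$ is provably in $W$ an active $X$-premouse minimal with $n$ Woodins, $W\sats$``$M_n^\#(X)$ exists'', and comparison in $W$ of $N$ with $(M_n^\#(X))^W$ together with minimality gives $(M_n^\#(X))^W=N=(M_n^\#(X))^V$, whence $\Sigma_N^W=\Sigma\rest W$ by uniqueness of the strategy in $W$. As $X$ ranged over all transitive members of $W$, this gives both parts of (3).

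The step I expect to be the main obstacle is the $Q$-structure analysis: one must check, in each of $V$, $V[G]$ and $W$, that $\Sigma_N$ is guided \emph{entirely} by $Q$-structures that are initial segments of $M_{<n}^\#(\Mm)$ for common part models $\Mm$, and that the correct branch is never lost for lack of the relevant $Q$-structure -- which is exactly what the inductive hypothesis supplies, and is the reason the hypothesis is stated for \emph{all} transitive sets, the common part models $\Mm(\Tt)$ being new transitive sets rather than members of any fixed collection. A secondary subtlety worth flagging is that, unlike the passage to $V[G]$, iterability of $N$ does not pass down from $V$ to the inner model $W=\HOD_C(Y)$ for free; it is precisely the presence of the $Q$-structures inside $W$, furnished by item (3)(b) at level $n-1$, that makes $\Sigma\rest W$ a total strategy there.
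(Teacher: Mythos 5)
Your proposal follows the same route as the paper's sketch: induction on $n$, with the $Q$\nobreakdash-structure--guided putative strategy supplied by the $M_{n-1}^\#$-operator providing the absolutely-defined strategy for $M_n^\#$, and an absoluteness-of-iterability/reflection argument to show it works in $V[G]$ and in $\HOD_C(Y)$. You defer essentially the same details the paper does (the reflection to a countable hull showing the putative strategy is total, which the paper itself refers forward to Claim~\ref{clm:iterability} in the proof of Theorem~\ref{tm:Mnsharp}), and you correctly flag the point at which more work would be needed.
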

\begin{proof}[Proof sketch]
This is just a standard kind of absoluteness of iterability argument, involving reflection;
see for example the arguments in \cite[\S2]{cmip}.
The basic point is much like in the proof of the following lemma, so one should see that.
The proof is by induction on $n$,
so we may assume that if $n>0$ then it already holds for $n-1$, and hence we already have closure under the $M_{n-1}^{\#}$-operator etc. Using this operator, we can define a putative strategy for a putative $M_{n}^{\#}(X)$, and then prove that it works. Compare with the proof below.

Actually, we give a more detailed instance of this proof in the proof of Claim \ref{clm:iterability} of the proof of Theorem \ref{tm:Mnsharp} later.
\end{proof}

In the following lemma we use robust constructions
as in \cite{Kwithoutmeas}.
\begin{lem}
 Assume ZF. Let $n<\om$, and suppose that for every transitive set $X$, $M_n^\#(X)$ exists and is $(0,\OR)$-iterable. Then either:
 Moreover, either:
 \begin{enumerate}
  \item For every transitive set, $M_{n+1}^{\#}(X)$ exists and is $(0,\OR)$-iterable, or
  \item There is a transitive set $X$ such that:
  \begin{enumerate}\item There is no $(0,\OR)$-iterable $M_{n+1}^{\#}(X)$,
  and moreover, for all $\alpha\in\OR$
  with $X\in V_\alpha$,
  there is no $(0,\OR)$-iterable $M_{n+1}^{\#}(V_\alpha)$.
  \item Let $C$ be any class\footnote{In the end we not actually need to quantify over classes here.}. Let $G$ be $(\HOD_C(X),\Coll(\om,X))$-generic.
  Work in $\HOD_C(X)[G]$ (where ZFC holds). There we have the following:
  \begin{enumerate}\item Let $\left<N_\alpha\right>_{\alpha\leq\theta}$ be a robust $K^c$ construction over $N_0=\J(X)$. Then for  every $\alpha\leq\theta$ and every $m<\om$,
  $\core_m(N_\alpha)$ is $(m,\OR)$-iterable,
  and $N_\alpha$ is $(n+1)$-small;
   moreover, there are no exotic creatures of the construction.
  \item 
  There is  a maximal such robust $K^c$ construction with $\theta=\OR$.
  \end{enumerate}
  \end{enumerate}
 \end{enumerate}
 \end{lem}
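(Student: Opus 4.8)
The plan is this: assuming alternative (1) fails, fix a transitive set $X$ with no $(0,\OR)$-iterable $M_{n+1}^\#(X)$, and show that $X$ witnesses alternative (2). The picture to keep in mind is that robust $K^c$ constructions relative to $X$ either reach a non-$(n+1)$-small level — from which one could read off an $(0,\OR)$-iterable $M_{n+1}^\#(X)$, a contradiction — or else behave exactly as in ordinary $K^c$ theory and so run all the way to $\OR$. That our chosen $X$ witnesses (2)(a) I would obtain by relativization-down: if $X\in V_\alpha$ and $M_{n+1}^\#(V_\alpha)$ were $(0,\OR)$-iterable, then since $\rank(X)<\alpha$ and all $n+1$ Woodins of $M_{n+1}(V_\alpha)$ lie above $\alpha$, a fully backgrounded construction over $X$ carried out inside $M_{n+1}(V_\alpha)$ reaches a least non-$(n+1)$-small level, which by comparison and uniqueness is $M_{n+1}^\#(X)$; its iterability is inherited from that of $M_{n+1}(V_\alpha)$ by the standard lifting of strategies to internally constructed mice (as in \cite{iter_for_stacks}). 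This contradicts the choice of $X$.

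For (2)(b), fix a class $C$ and an $(\HOD_C(X),\Coll(\om,X))$-generic $G$, and work in $\HOD_C(X)[G]$, where $\ZFC$ holds and $X$ is countable. By Lemma~\ref{lem:M_n^sharp_basic_closure}(1) and~(3), the $M_n^\#$-operator exists and is $(0,\OR)$-iterable there, and it — together with its strategy — agrees with the one computed in $V$; this correctness is what powers the reflection steps below. The heart of the argument is a single induction on $(\alpha,m)$ showing that, for a robust $K^c$ construction $\left<N_\alpha\right>_{\alpha\leq\theta}$ over $\J(X)$, each $\core_m(N_\alpha)$ is $(m,\OR)$-iterable, each $N_\alpha$ is $(n+1)$-small, and no exotic creatures of the construction arise. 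I would run this exactly as in the absoluteness-of-iterability arguments of \cite{cmip} and of Claim~\ref{clm:iterability} in the proof of Theorem~\ref{tm:Mnsharp} below, combined with the robust $K^c$ analysis of \cite{Kwithoutmeas}: robustness of the background certificates supplies the cofinal wellfounded branches and settles the bicephalus and phalanx comparisons, while the $M_n^\#$-operator supplies the $Q$-structures that identify branches — which is legitimate precisely as long as the models stay $(n+1)$-small, so the two inductions have to be run in tandem.

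The failure of (1) is exactly what rules out the bad cases of that induction. Suppose first that some $N_\alpha$ is not $(n+1)$-small, and take $\alpha$ least; then from the iterable core of $N_\alpha$ one reads off, as in the first paragraph, an $M_{n+1}^\#(X)$ that is $(0,\OR)$-iterable in $\HOD_C(X)[G]$ via a strategy guided by the $M_n^\#$-operator. Since that operator and its strategy agree with $V$'s by Lemma~\ref{lem:M_n^sharp_basic_closure}, the very same premouse $M_{n+1}^\#(X)\in V$ is then $(0,\OR)$-iterable in $V$, contradicting the choice of $X$. Suppose instead that the $Q$-structure-guided analysis of core iterability breaks down — an iteration tree on some $\core_m(N_\alpha)$ with no $(n+1)$-small $Q$-structure, or an exotic bicephalus or phalanx not resolvable by robustness; then the arguments of \cite{Kwithoutmeas} still produce, inside $\HOD_C(X)[G]$, a non-$(n+1)$-small iterable mouse over a transitive set, hence (by relativization-down) an $(0,\OR)$-iterable $M_{n+1}^\#(X)$ there, and reflecting its $M_n^\#$-guided strategy to $V$ gives the same contradiction. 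So the induction goes through, which is exactly (i).

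Granted (i), item (ii) is immediate: a robust $K^c$ construction over $\J(X)$ never terminates, since at each stage one either adds a robust extender or passes to $\J(N_\alpha)$, always retaining a well-defined $(n+1)$-small premouse with iterable cores, while $(n+1)$-smallness forbids any premature completion — so a maximal such construction has $\theta=\OR$. I expect the genuine obstacle to be the iterability of the $K^c$ cores in the two preceding paragraphs: this is the full ``$K^c$ is iterable'' analysis of \cite{Kwithoutmeas}, adapted to the $(n+1)$-small, $M_n^\#$-relativized, $\ZF$-background setting by the absoluteness method; once that is in hand, the rest is routine bookkeeping.
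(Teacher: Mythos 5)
Your proposal follows the same route as the paper's own (very brief) sketch: assume some $X$ has no $(0,\OR)$-iterable $M_{n+1}^\#(X)$, pass to $\HOD_C(X)[G]$, use Lemma~\ref{lem:M_n^sharp_basic_closure} to transfer $M_n^\#$-closure there, guide the iteration trees on the robust $K^c$ cores by $Q$-structures read off $M_n^\#(M(\Tt))$, and fall back on robustness together with reflection to a countable hull to supply branches when that guidance alone is not enough. The one technical wrinkle that the paper explicitly flags and your sketch passes over is that when iterating a non-sound $\core_m(N_\alpha)$ with Woodin cardinals, the required $Q$-structure may fail to be $\delta(\Tt)$-sound and hence need not literally be an initial segment of $M_n^\#(M(\Tt))$; one must instead search for a $\Tt$-cofinal branch $b$ such that the $\delta(\Tt)$-core of $M^\Tt_b$ is a segment of $M_n^\#(M(\Tt))$.
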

 \begin{proof}[Proof sketch]
  Suppose that for some $X$, there is no $(0,\OR)$-iterable $M_{n+1}^{\#}(X)$. Work in $\HOD_C(X)[G]$
  as described in the statement of the lemma,
  and fix a robust $K^c$ construction as there.
  Then roughly, we use the $M_n^\#$-operator
  in $\HOD_C(X)[G]$ to produce the Q-structures
  which guide the formation of iteration trees on $\core_m(N_\alpha)$. Because $M_n^\#(Y)$
  exists and is fully iterable for every $Y$
  (in $\HOD_C(X)[G]$), this is fairly reasonable.
  (That is, given a tree $\Tt$ of limit length,
  we look at initial segments of $M_n^\#(M(\Tt))$
  for the next Q-structure.)
  It gives an absolute enough definition that
  in case it fails, we can reflect it down by taking a countable substructure, and use robustness
  to ensure the existence of branches where needed, at the countable level;
  see \cite{jensen_robust}
  and \cite{Kwithoutmeas}. There is a little more detail
  to handle in case we are iterating a non-sound structure which has Woodin cardinals.
  (So, the actual Q-structure
  we need might fail to be $\delta(\Tt)$-sound,
  and so it could differ from $M_n^\#(M(\Tt))$.
  In this case we instead need to look for a $\Tt$-cofinal branch $b$ such that the $\delta(\Tt)$-core of $M^\Tt_b$ is a segment of $M_n^\#(M(\Tt))$.)
 \end{proof}

\section{ZFR and
 mice with some Woodin cardinals}
 
\begin{tm}\label{tm:Mnsharp}
$\ZFR$ implies that for every set $X$ and $n<\om$, $M_n^\#(X)$ exists
and is $(0,\OR)$-iterable.
\end{tm}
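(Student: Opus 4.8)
The plan is to prove this by induction on $n$; it suffices to treat transitive $X$. For the base case $n=0$, where $M_0^{\#}(X)=X^{\#}$, I would first observe that $\ZFR$ provides unboundedly many ordinal fixed points of $j$: for any ordinal $\alpha$, set $\alpha_0=\alpha$ and $\alpha_{k+1}=j(\alpha_k)$ (this recursion is legitimate since $j$ may appear in the Collection scheme of $\ZF(j)$), and then $\mu=\sup_{k<\om}\alpha_k$ satisfies $j(\mu)=\mu$, by elementarity and continuity of $j$ at cofinality $\om$. For such a $\mu$ with $\rank(X)<\mu$ we have $j(V_\mu)=V_\mu$, so $j$ restricts to a nontrivial elementary self-embedding of $L[V_\mu]$; hence $(V_\mu)^{\#}$ exists and is $(0,\OR)$-iterable, and $X^{\#}$ is recovered from it.

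For the inductive step, assume that $M_n^{\#}(Y)$ exists and is $(0,\OR)$-iterable for every transitive $Y$; by Lemma~\ref{lem:M_n^sharp_basic_closure} the $M_n^{\#}$-operator is then available and suitably absolute in the generic extensions and $\HOD$-type models used below. Apply the preceding dichotomy lemma, whose hypothesis is exactly this. If its first alternative holds, we are done; so suppose the second: there is a transitive $X$ with no $(0,\OR)$-iterable $M_{n+1}^{\#}(V_\alpha)$ for any $\alpha$ with $X\in V_\alpha$. Using that the fixed points of $j$ are unbounded, fix $\gamma$ with $j(\gamma)=\gamma$ and $X\in V_\gamma$, and replace $X$ by $V_\gamma$; so $j(X)=X$. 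Since $j$ is $\Sigma_m$-elementary for every $m$ (the footnote to the definition of $\ZFR$) and $j(V_\gamma)=V_\gamma$, the restriction $\bar\jmath:=j\rest\HOD(V_\gamma)$ is a fully elementary self-embedding $\HOD(V_\gamma)\to\HOD(V_\gamma)$ (a class of $V$, not in general amenable to $\HOD(V_\gamma)$) with $\bar\jmath\rest\OR=j\rest\OR$ and $\crit(\bar\jmath)=\kappa_{0,j}$.

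Now work in $\HOD(V_\gamma)[G]$ for $G$ generic for $\Coll(\om,V_\gamma)$; by the dichotomy lemma there is a maximal robust $K^{c}$-construction of length $\OR$, producing a proper class premouse $K$ that is $(n+1)$-small, iterable after coring, and free of exotic creatures --- in effect the core model below ``$M_{n+1}^{\#}$'' of $\HOD(V_\gamma)[G]$, which by generic absoluteness is already a class of $\HOD(V_\gamma)$, definable there from the $\bar\jmath$-fixed parameter $V_\gamma$. Hence $\bar\jmath(K)=K$, and $\sigma:=\bar\jmath\rest K:K\to K$ is elementary; it is nontrivial since $\sigma(\kappa_{0,j})=j(\kappa_{0,j})>\kappa_{0,j}$, and $\sigma\rest\OR=j\rest\OR$ since $\OR^{K}=\OR$. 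Now run the standard ``certify or $M_{n+1}^{\#}$'' analysis of this self-embedding: for each ordinal $\alpha$ form the $(\kappa_{0,j},\alpha)$-extender $E_\alpha$ over $K$ derived from $\sigma$, and compare $K$ with $\Ult(K,E_\alpha)$ using the iterability and the maximality of $K$ (and the absence of exotic creatures). Either this yields $\sigma\rest\alpha\in K$ for every $\alpha$ --- so $j\rest\OR=\sigma\rest\OR$ is amenable to $K$, hence to $\HOD(V_\gamma)$, contradicting the Kunen-style non-amenability lemma of Section~\ref{sec:Reinhardt_sharps} (or, after passing to the $\om$th iterate, Lemma~\ref{lem:j_rest_OR_non-amenable}) --- or else at some stage it exhibits, over some $K|\alpha$ and hence over $V_\beta$ for a suitable $\beta\geq\gamma$, an $(0,\OR)$-iterable $M_{n+1}^{\#}(V_\beta)$, contradicting the choice of $X=V_\gamma$. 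Either way the second alternative is refuted, and $M_{n+1}^{\#}(X)$ exists and is $(0,\OR)$-iterable for every transitive $X$, completing the induction.

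I expect the last analysis to be the main obstacle. Making ``certify or $M_{n+1}^{\#}$'' rigorous requires the full apparatus of robust $K^{c}$-constructions, certificates and comparison, carried out with attention to the subtlety already flagged in the proof sketch of the dichotomy lemma: one may be iterating a non-sound premouse with Woodin cardinals, so the relevant $Q$-structure need not be $\delta(\Tt)$-sound and must be located via a branch whose $\delta(\Tt)$-core is an initial segment of $M_n^{\#}(M(\Tt))$, rather than as $M_n^{\#}(M(\Tt))$ itself. This is also the point at which the gap in the original argument arose and where Goldberg's 2021 result is invoked --- presumably to secure the wellfoundedness or iterability needed either to compare $K$ with the ultrapowers $\Ult(K,E_\alpha)$ or to control the $\om$th iterate $N_\om$ --- and one must stay careful about the interplay between the ambient $\ZF$ model $V$, the ZFC model $\HOD(V_\gamma)[G]$ in which $K$ is built, and the iterate $N_\om$, using Lemmas~\ref{lem:M_n^sharp_basic_closure} and~\ref{lem:j_rest_OR_non-amenable} together with the Kunen-style lemma as the anchors against which the contradiction is derived.
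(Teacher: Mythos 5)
Your overall shape (induction on $n$, $n=0$ base via a fixed point of $j$, inductive step via a core model dichotomy and rigidity of $K$ against $j$) matches the paper, but both the base case and the inductive step have genuine gaps where the real work happens.

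In the base case you pass directly from ``$j$ restricts to a nontrivial elementary self-embedding of $L(V_\mu)$'' to ``$(V_\mu)^{\#}$ exists,'' but $\crit(j\rest L(V_\mu))=\crit(j)<\mu$, and a nontrivial self-embedding of $L(V_\mu)$ with critical point \emph{below} $\mu$ does not by itself yield $(V_\mu)^{\#}$: it could in principle be captured by an internal $V_\mu$-extender, in which case there need be no indiscernibles above $\mu$. The paper's argument is precisely to rule this out: it derives the $V_\mu$-extender $E$ from $j$, forms $U=\Ult_0(L(V_\mu),E)$ and the factor map $\ell:U\to L(V_\mu)$ with $\ell\rest V_\mu\cup\{\mu\}=\mathrm{id}$, and then shows $\ell\neq\mathrm{id}$ --- because otherwise $j\rest L(V_\mu)=i^{L(V_\mu),0}_E$ would be definable from the set $E$, hence $j\rest\OR$ amenable to $N_\om[E]$, contradicting Lemma~\ref{lem:j_rest_OR_non-amenable}. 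It is $\ell$ (with $\crit(\ell)>\mu$), not $j\rest L(V_\mu)$, that witnesses $(V_\mu)^{\#}$. This non-amenability step is the whole content of the base case and is missing in your write-up.

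In the inductive step, the paper does not run the ``certify or $M_{n+1}^{\#}$'' dichotomy on $\sigma:=j\rest K$ as you propose. Two specific problems with your version: (i) comparing $K$ with $\Ult(K,E_\alpha)$, where $E_\alpha$ is the $(\kappa_{0,j},\alpha)$-extender derived from $\sigma$, does not produce $\sigma\rest\alpha\in K$ even if $E_\alpha$ is ``certified'' --- $E_\alpha$ records $\sigma$ only on subsets of $\kappa_{0,j}$ and generators below $\alpha$, not the graph of $\sigma$ on $\alpha$ --- so the amenability horn of your dichotomy does not follow; (ii) the claim that the proper-class $K$ from the robust $K^c$-construction of $\HOD(V_\gamma)[G]$ is ``already a class of $\HOD(V_\gamma)$, definable there from $V_\gamma$'' skips the parameterization the paper needs: it builds a \emph{local} $\widetilde{K}(\tau,\Omega)$ with $\tau,\Omega$ chosen carefully, uses Goldberg's measurable-cardinal theorem precisely to get $\Omega$ regular in $V$ (needed so that Jensen's condensation argument identifies $R'=R''$ in Claim~\ref{clm:Ult_pres_stack}), and separately verifies $j(\tau)=\tau$ via an elementarity argument over $H_j$. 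The paper then avoids your direct-embedding dichotomy entirely: it constructs a $\tau_0$-very soundness witness $Y$ from the $j$-fixed parameters, forms $Y_1=\Ult_0(Y,E)$ and $Y_2=\Ult_0(Y_1,F_Y)$ via the derived extenders and factor maps, compares the phalanx $((Y,{<k_Y(\kappa)}),Y_2)$ with $Y_1$, and gets a contradiction from the ISC (the first extender along $b^{\Uu}$ would witness $\kappa$ superstrong in $Y_1$, impossible for an $(n+1)$-small weasel). Your key nonamenability anchor (Lemma~\ref{lem:j_rest_OR_non-amenable}) is used in the paper, but only to show $\crit(k)$ exists, not as one horn of a certify-or-mouse dichotomy. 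So while your plan has the right ingredients in view, the mechanism by which the contradiction is extracted is both different from and substantially more detailed in the paper than what your sketch provides.
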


The iterability here refers to (fine) iteration trees, which are above $X$.

\begin{rem}The $n=0$ case of the theorem (that is, that every set has a sharp)
 was observed independently and earlier by Gabe Goldberg.
 
 The basic form of the proof to follow is like that of \cite[Theorem 7.2]{con_lambda_plus_2}; however,
 the proof to follow was actually found earlier than that one (excluding the detail arranging that $\Omega$ is regular in $V$).
\end{rem}

\begin{proof}
 The proof is by induction on $n<\om$; we start with a proof
 that every set has a sharp.
We adopt the notation from the 
previous section.
 It suffices to see that for cofinally many $\eta\in\OR$,
 $V_\eta$ has a sharp. So fix a limit $\eta$ such that $j(\eta)=\eta$.
 Then
 \[ j\rest L(V_\eta):L(V_\eta)\to L(V_\eta).\]
Let $E$ be the $V_\eta$-extender derived from $j$
(see \cite{ZF_extenders}).
Let $U=\Ult_0(L(V_\eta),E)$ (note that the subscript $0$
means that the ultrapower is formed with pairs $[a,f]$ where $a\in\left<V_\eta\right>^{<\om}$ and $f\in L(V_\eta)$).
The ultrapower satisfies \L o\'s's theorem, because it is derived from $j$.
That is, let $f:\left<V_\eta\right>^{<\om}\to L(V_\eta)$
with $f\in L(V_\eta)$ and let $a\in\left<V_\eta\right>^{<\om}$
and $\varphi$ be a formula and suppose that 
\[ \all^*_{E_a}k\ \big[L(V_\eta)\sats\exists x\varphi(x,f(k))\big].\]
Then there is $b\in\left<V_\eta\right>^{<\om}$
with $a\sub b$, and $g\in L(V_\eta)$, such that
\[ \all^*_{E_b}k\ \big[L(V_\eta)\sats\varphi(g(k),f^{ab}(k))\big].\]
(See \cite{ZF_extenders} for more details.)
So we get $U=L(V_\eta)$ and
get an elementary ultrapower map
\[ i=i^{L(V_\eta),0}_{E}:L(V_\eta)\to 
U=L(V_\eta), \]
and the factor map
$\ell:U=L(V_\eta)\to L(V_\eta)$, where 
$\ell([a,f]^{L(V_\eta)}_E)=j(f)(a)$;
by \L o\'s's theorem,
$\ell$ is well-defined and elementary.
Note
 $i\rest V_\eta=j\rest V_\eta$
and $\ell\com i=j\rest L(V_\eta)$. We have
$\ell\rest V_\eta\cup\{\eta\}=\id$.
So it suffices to see that $\ell\neq\id$.
But if $\ell=\id$ then
$i=j\rest L(V_\eta)$, so $j\rest\OR$ is definable
over $L(A)$ for some set $A$,
and hence is amenable to $N_\om[A]$,
contradicting Lemma \ref{lem:j_rest_OR_non-amenable}.
This completes the $n=0$ case.

 We next proceed through a (finite stage)
core model induction.\footnote{
The author does not know whether one can adapt
the core model theory of \cite{Kwithoutmeas} to $\ZF$ (or $\ZFR$);
if one can do that successfully, it might simplify
the arguments to follow. Instead of that, we apply
the standard core model theory directly in models of choice.} So fix $n<\om$ and 
suppose that for all sets $X$, 
$M_n^\#(X)$ exists and is $(0,\OR)$-iterable (of course, this means above $X$).
By Lemma \ref{lem:M_n^sharp_basic_closure},
for every transitive set $X$, $\HOD(X)$ is closed under the $M_n^\#$-operator,
and under the corresponding iteration strategies, and can define the operator
and strategies. Likewise $\HOD_A(X)$, for classes $A$. Now we
want to verify the theorem at $n+1$.

Fix $\eta\in\OR$ such that $j(\eta)=\eta$.
It suffices to see that $M_{n+1}^\#(V_\eta)$ exists (and is fully iterable).
Let $H=\HOD(V_\eta)$ and $H_j=\HOD_j(V_\eta)$
(Definition \ref{dfn:HOD_C(X)}).
Let $g$ 
be $(H_j,\Coll(\om,V_\eta))$-generic;
so $H[g]\sats\ZFC$ and $H_j[g]\sats\ZFC$. By standard arguments, 
$H[g]$ and $H_j[g]$ also satisfy
``for every set $X$, $M_n^\#(X)$ exists and is $\OR$-iterable'',
and agree with $H,H_j$
over the restrictions of 
these operators and their 
corresponding iteration 
strategies to $H,H_j$ respectively. (One could use \cite{iter_for_stacks} to 
extend the 
iteration 
strategies to the generic extensions, but it should be easier than this in the 
current 
context.)

\begin{clm}\label{clm:iterability} Suppose 
$H[g]\sats$``$M=M_{n+1}^\#(V')$ exists and is $(0,\OR)$-iterable''
where $V'=V_\eta$.
Then $M\in H$ and $M$ is  also $(0,\OR)$-iterable in $H$ and $V$.
\end{clm}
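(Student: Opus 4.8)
The statement to prove is Claim \ref{clm:iterability}: if $H[g] \models$ ``$M = M_{n+1}^\#(V_\eta)$ exists and is $(0,\OR)$-iterable'', then $M \in H$ and $M$ is $(0,\OR)$-iterable in $H$ and in $V$. The plan is to run the same elementary-ultrapower argument used in the $n=0$ case, but now applied to the (hybrid) structure $H = \HOD(V_\eta)$ rather than $L(V_\eta)$, using the fact that $H$ is already closed under the $M_n^\#$-operator and its strategies (by Lemma \ref{lem:M_n^sharp_basic_closure}), and then use condensation/absoluteness of the $M_{n+1}^\#$-operator to transfer iterability up and down. First I would observe that since $j(\eta) = \eta$ and $V_\eta$ is definable from $V_\eta$ as a parameter, $j$ restricts to an elementary-enough map $j \rest H : H \to H$ (more precisely: $j\rest\OR$ induces a map on $H$ through the canonical class wellorder of $H$, as in the proof of Lemma \ref{lem:j_rest_OR_non-amenable}; one checks $j(V_\eta) = V_\eta$ and that $\HOD(V_\eta)$ is sent to itself). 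I would then form the $V_\eta$-extender $E$ derived from $j$ and take $U = \Ult_0(H, E)$, establish \L o\'s's theorem for it exactly as in the $n=0$ case (the ultrapower is derived from an actual map $j$, so \L o\'s goes through by the witness argument via $a \sub b$), obtaining the ultrapower map $i : H \to U$ and the factor map $\ell : U \to H$ with $\ell \com i = j \rest H$, $i \rest V_\eta = j \rest V_\eta$, and $\ell \rest (V_\eta \cup \{\eta\}) = \id$.

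**Transferring $M$ into $H$.** Now $M = (M_{n+1}^\#(V_\eta))^{H[g]}$; by homogeneity of $\Coll(\om, V_\eta)$ and the usual argument (the sharp and its strategy are definable without the generic — compare the parenthetical remark after the setup of $H[g]$), $M \in H$ and $M$'s $(0,\OR)$-strategy restricts correctly to $H$. More carefully: $M_{n+1}^\#(V_\eta)$ computed in $H[g]$ does not depend on $g$ (any two generics give the same answer by a symmetry/absoluteness argument, since the object is the unique $(0,\OR)$-iterable sharp and iterability is absolute between the generic extensions by Lemma \ref{lem:M_n^sharp_basic_closure} applied inside $H$), and it is ordinal-definable from $V_\eta$, hence lies in $\HOD(V_\eta) = H$. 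So $M \in H$, and $M$ is $(0,\OR)$-iterable in $H[g]$; pulling the strategy back to $H$ (again using that the strategy is definable and generic-absolute) gives that $M$ is $(0,\OR)$-iterable in $H$.

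**Transferring iterability to $V$, and closing the loop.** For iterability in $V$: I would apply $i : H \to U$. Since $i \rest V_\eta = j \rest V_\eta = \id \rest V_\eta$ (as $\crit(j) > \rank$ of elements we care about — actually $\crit(j) = \kappa_0$ and $V_\eta \models$ enough, but more simply $i \rest V_\eta$ agrees with $j\rest V_\eta$ which fixes $V_{\kappa_0}$ and moves things above), we get $i(M) = M_{n+1}^\#(i(V_\eta))$ in $U$; and via the factor map $\ell$, which is the identity on $V_\eta \cup \{\eta\}$, one recovers that $M = M_{n+1}^\#(V_\eta)$ is iterable from the perspective of $V$. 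The cleanest route: the full $(0,\OR,\OR)^*$-iterability from the first Fact in Section \ref{sec:Reinhardt_sharps} combined with the uniqueness of the strategy (second Fact) means iterability is really a $\Pi_1$-type property of $M$ over sufficiently closed models; since $H$ is cofinal in the ordinals and $V$ and $H$ have the same ordinals, and the strategy $\Sigma_M$ is defined via the $M_n^\#$-operator (which $V$ also carries, as $M_n^\#$-iterability holds in $V$ by the induction hypothesis), the strategy witnessing iterability in $H$ is a restriction of / extends to the corresponding strategy in $V$. I would make this precise by the same reflection-and-robustness argument alluded to in the proof sketch of Lemma \ref{lem:M_n^sharp_basic_closure}: a putative failure of iterability in $V$ reflects to a countable substructure, where robustness of $K^c$-constructions supplies the needed cofinal branches.

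**Main obstacle.** The hard part will be the transfer of iterability between the three models $H$, $H[g]$, and $V$ with the right \emph{strategy}, not just iterability as a bare existence statement — in particular making sure the $(0,\OR)$-strategy for $M$ that we extract is genuinely the canonical one (so that it is definable, generic-absolute, and agrees across the models), and handling the case where the relevant $Q$-structures come from non-sound mice with Woodin cardinals, as flagged in the proof sketch of the $K^c$ lemma. Establishing \L o\'s's theorem for $\Ult_0(H,E)$ is routine given it is derived from $j$, and $j(V_\eta) = V_\eta$ plus $j \rest H : H \to H$ is the same bookkeeping as in Lemma \ref{lem:j_rest_OR_non-amenable}; so those are not the obstacle. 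The genuinely delicate point is the absoluteness of the strategy, which is why the author says to compare with the detailed version given later in the proof of Theorem \ref{tm:Mnsharp}.
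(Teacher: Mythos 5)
Your proposal correctly identifies two of the three moves (homogeneity plus uniqueness for $M\in H$; a Q-structure-guided putative strategy $\Gamma$ relying on closure under the $M_n^\#$-operator for iterability in $V$), but the mechanism you choose to justify $\Gamma$ in $V$ is not the one the paper uses, and in fact does not work as you describe.

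First, the ultrapower $\Ult_0(H,E)$ plays no role in the proof of this claim. It appears later in the proof of Theorem~\ref{tm:Mnsharp}, after both claims have already been established. Within the claim the map $i:H\to U$ buys you nothing: iterability of $M$ in $V$ is not a first-order statement over $H$ that $i$ could preserve, and your attempt to extract iterability from $i(M)=M_{n+1}^\#(i(V_\eta))$ has no traction (you also momentarily wrote $j\rest V_\eta=\id$, which is false since $\crit(j)<\eta$). You eventually abandon this route, so it is mainly a misdirection, but it is worth flagging as not part of the argument.

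Second, and more seriously, your fallback --- ``reflect a putative failure in $V$ to a countable substructure, where robustness of $K^c$-constructions supplies the needed cofinal branches'' --- imports the wrong tool. There is no $K^c$ construction in the proof of this claim, and robustness is a property of stages of such constructions, not of $M=M_{n+1}^\#(V_\eta)$. What actually supplies the cofinal branches at the countable level is the hypothesis of the claim: $M$ is $(0,\OR)$-iterable \emph{in $H[g]$}. The paper's reflection argument runs entirely inside $H[g]$: a putative bad tree $\Tt$ on $M$ in $V$ lives in $V_\xi^H[G]$ for some $(H,\PP)$-generic $G$ with $\PP\in V_\xi^H$ (using that every subset of $\OR\cross V_\eta$ in $V$ is set-generic over $H$ --- the key cardinal-arithmetic input you omit); the badness of $\Tt$, and the $M_n^\#$-verification of the Q-structures along it, are then forced by some condition $p\in\PP$ about names in $V_\xi^H$; and then, working in $H[g]$, one has a tree $T$ of attempts to build a countable $X\elem M_n^\#(V_\xi^H)$ containing the relevant names, $\PP$, $p$, and all of $V_\eta$. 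Collapsing $X$ and choosing a generic for the collapsed forcing \emph{inside $H[g]$} produces a genuine failure of the strategy for $M$ there, contradicting the hypothesis. So the contradiction comes from the assumed strategy in $H[g]$, not from robustness. Your version, as stated, has no source for the branches and would not close.
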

\begin{proof}
We get $M\in H$ by the uniqueness of $M$ in $H[g]$ and homogeneity of the 
collapse. And $M$ is similarly $(0,\OR)$-iterable in $H$.
One can use an absoluteness argument to see that the iteration strategy 
extends to $V$, using that $H$ is closed under the $M_n^\#$-operator,
and that every subset of $\OR\cross V_\eta$
in $V$ is set-generic over $H$. That is, we claim that
$M$ is $(0,\OR)$-iterable in $V$, via the following putative strategy $\Gamma$:
given a limit length $0$-maximal tree $\Tt$ on $M$ according to $\Gamma$,
then $\Gamma(\Tt)$ is the branch determined by the Q-structure
$Q\ins M_n^\#(M(\Tt))$ for $M(\Tt)$ (if there is such).
Suppose there is some $\Tt\in V$ according to $\Gamma$
for which this fails to yield a Q-structure 
$Q$, or such that $Q$ fails to yield  a wellfounded $\Tt$-cofinal branch,
or etc. Fix some $\xi\in\OR$ such that there is a forcing $\PP\in V_\xi^H$
and an $(H,\PP)$-generic $G$ such that the counterexample
$\Tt$, etc, appears in $V_\xi^H[G]$. We have $M_n^\#(V_\xi^H)\in H$,
and $M_n^\#(V_\xi^H)[G]$ is equivalent to $N=M_n^\#(V_\xi^H[G])$,
and $N$ computes $M_n^\#(Y)$ for all $Y\in V_\xi^H[G]$,
and thus can be used to verify the construction of $\Tt$ (and possibly $Q$) etc.
All of this gets forced by some $p\in\PP$ about some names in $V_\xi^H$.
Thus, in $H[g]$ we have the tree $T$ of attempts to build
a countable elementary substructure $X\elem M_n^\#(V_\xi^H)$,
containing the relevant objects $\PP$, etc, including all elements of $V_\eta$.
Letting $\bar{M}$ be the transitive collapse of $X$,
in $H[g]$ we can choose a generic for $\bar{\PP}$ (the collapse of $\PP$), etc,
but this easily contradicts the
iterability of $M_{n+1}^\#(V_\eta)$ in $H[g]$.
\end{proof}

\begin{clm}
 Suppose $H[g]\sats$``There is a sound premouse $N$ of the form of $M_{n+1}^{\#}(V')$ such that for every countable $\bar{N}$ and every elementary $\pi:\bar{N}\to N$, $\bar{N}$ is $(0,\om_1)$-iterable''.
 Then $H[g]\sats$``$N=M_{n+1}^{\#}(X)$ and $N$ is $(0,\OR)$-iterable''.
\end{clm}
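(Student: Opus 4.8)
The plan is to work entirely inside $H[g]$, where $\ZFC$ holds and, by Lemma~\ref{lem:M_n^sharp_basic_closure} together with the generic-absoluteness remarks preceding Claim~\ref{clm:iterability}, the $M_n^\#$-operator is total, fully iterable, and definable. (Here $X=V'$.) The strategy is: first upgrade the hypothesised countable iterability of the hulls of $N$ to full $(0,\OR)$-iterability of $N$ via Q-structures and a reflection argument exactly like the one in the proof of Claim~\ref{clm:iterability}, and then apply the standard comparison argument to identify $N$ with $M_{n+1}^\#(X)$. So first I would define a putative $(0,\OR)$-iteration strategy $\Gamma$ for $N$ just as for the strategy $\Gamma$ in the proof of Claim~\ref{clm:iterability}: given a limit length $0$-maximal tree $\Tt$ on $N$ according to $\Gamma$ so far, let $\Gamma(\Tt)$ be the unique $\Tt$-cofinal branch $b$ such that $\core_{\delta(\Tt)}(M^\Tt_b)\ins M_n^\#(M(\Tt))$, if there is such (when $M^\Tt_b$ is $\delta(\Tt)$-sound this just says the Q-structure for $M(\Tt)$ is an initial segment of $M_n^\#(M(\Tt))$). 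Since $N$ has the form of $M_{n+1}^\#(X)$ it is $(n+1)$-small, hence so is every $M^\Tt_b$, so any such Q-structure is $n$-small above $\delta(\Tt)$ and the usual comparison forces it to be an initial segment of $M_n^\#(M(\Tt))$; in particular such a $b$ is unique and $\Gamma$ is well-defined as a putative strategy. It then suffices to show $\Gamma$ is total.

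To show $\Gamma$ never breaks down I would use the hypothesis via reflection. Suppose some limit length $\Tt$ on $N$ according to $\Gamma$ witnesses a failure (no branch of the required form, or the chosen branch illfounded, or some further such defect). Fix large $\theta$ and a countable transitive $P$ with an elementary $\sigma\colon P\to V_\theta^{H[g]}$ such that $N,\Tt\in\rg(\sigma)$; put $\bar N=\sigma^{-1}(N)$, $\pi=\sigma\rest\bar N\colon\bar N\to N$ (elementary), $\bar\Tt=\sigma^{-1}(\Tt)$. As in the proof of Claim~\ref{clm:iterability}, elementarity together with the coherence and correctness of the $M_n^\#$-operator (Lemma~\ref{lem:M_n^sharp_basic_closure}) gives that $\bar\Tt$ is built according to, and witnesses a failure of, the corresponding Q-structure strategy $\bar\Gamma$ for $\bar N$. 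But $\bar N$ is countable and $\pi\colon\bar N\to N$ is elementary, so by hypothesis $\bar N$ has a $(0,\om_1)$-strategy $\Delta$. Now I would argue $\Delta$ and $\bar\Gamma$ agree: for any limit length $\Uu$ according to both, $b=\Delta(\Uu)$ is a $\Uu$-cofinal wellfounded branch, so $\core_{\delta(\Uu)}(M^\Uu_b)$ is a Q-structure for $M(\Uu)$ which is iterable above $\delta(\Uu)$ via a tail of $\Delta$; by the uniqueness of iterable Q-structures and $(n+1)$-smallness, $\core_{\delta(\Uu)}(M^\Uu_b)\ins M_n^\#(M(\Uu))$, so $b=\bar\Gamma(\Uu)$. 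Hence $\Delta$ extends $\bar\Gamma$, so $\bar\Gamma$ cannot fail at $\bar\Tt$, a contradiction. Therefore $\Gamma$ is a total $(0,\OR)$-strategy, so $N$ is $(0,\OR)$-iterable in $H[g]$.

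Finally, $N$ is now a sound, $(0,\OR)$-iterable, $(n+1)$-small active premouse over $X$ which, having the form of $M_{n+1}^\#(X)$, satisfies ``there are $n+1$ Woodin cardinals $>\rank(X)$'' while no proper initial segment of it does; by the standard comparison/uniqueness argument the minimal such premouse over $X$ is unique and is, by definition, $M_{n+1}^\#(X)$. So $N=M_{n+1}^\#(X)$, and in particular $H[g]\sats$``$M_{n+1}^\#(X)$ exists, equals $N$, and is $(0,\OR)$-iterable'', as required.

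I expect the main obstacle to be the agreement $\Delta=\bar\Gamma$ in the reflection step: it rests on uniqueness of iterable Q-structures together with $(n+1)$-smallness, and in particular on the subtle case where some $M^\Uu_b$ is not $\delta(\Uu)$-sound but carries Woodin cardinals, where the Q-structure must be replaced by the $\delta(\Uu)$-core of $M^\Uu_b$ as flagged in the proof sketch of the preceding lemma. One also has to confirm that $\sigma^{-1}$ of the $M_n^\#$-operator of $V_\theta^{H[g]}$ agrees with the true operator on the relevant countable sets and that the collapse $P$ is genuinely closed under it, which follows from Lemma~\ref{lem:M_n^sharp_basic_closure} and the correctness of $M_n^\#$ under hulls.
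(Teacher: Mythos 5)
Your overall architecture matches the paper's: define a putative $(0,\OR)$-strategy $\Gamma$ for $N$ guided by $M_n^\#$-provided Q-structures (using the $\delta(\Tt)$-core when necessary), reflect any failure to a countable hull $\bar N,\bar\Tt$, invoke the hypothesised $(0,\om_1)$-strategy $\Delta$ for $\bar N$, and show $\Delta$ extends the countable version $\bar\Gamma$. This is exactly what the paper's ``comparison argument'' does. You have, however, left a real gap at precisely the point the paper is careful to single out.

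The issue is the step ``by the uniqueness of iterable Q-structures and $(n+1)$-smallness, $\core_{\delta(\Uu)}(M^\Uu_b)\ins M_n^\#(M(\Uu))$.'' That uniqueness is itself a comparison argument between two countable premice over $M(\Uu)$, and such a comparison can in principle run to length $\om_1$. On one side you have $M_n^\#(M(\Uu))$, fully (in particular $(\om_1+1)$-) iterable; on the other you have a structure iterable only via a tail of $\Delta$, which is merely a $(0,\om_1)$-strategy. At stage $\om_1$ your side has no branch. The hypothesis of the claim is deliberately only $(0,\om_1)$-iterability of countable hulls --- not $(\om_1+1)$ --- so this is the crux, not a technicality you can wave past. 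The paper explicitly addresses it: ``if a comparison reaches stage $\om_1$ then the Q-structure provided by the $(\om_1+1)$-iterable structure with which we are comparing yields a branch through the tree on the substructure $\bar N$.'' In other words, one must extract the needed $\om_1$-branch on the $\bar N$ side from the Q-structure supplied by the fully iterable side via branch uniqueness, rather than from $\Delta$ itself. Your write-up identifies the non-$\delta(\Uu)$-sound case as the main obstacle, but the $\om_1$-boundary is the more basic one and needs to be incorporated into the ``$\Delta$ extends $\bar\Gamma$'' step for the argument to go through. With that addition the proof becomes the paper's proof.
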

\begin{proof}
 The proof is similar to the foregoing one;
 we use the totality of the $M_n^\#$-operator
 to define a putative iteration strategy for $N$,
 and show that it does indeed work, by taking a countable elementary substructure of any failure,
 and running a comparison argument, using
 the $(0,\om_1)$-iterability of the countable structure; if a comparison reaches stage $\om_1$
 then the Q-structure provided by the $(\om_1+1)$-iterable structure with which we are comparing,
 yields a branch through the tree on the substructure $\bar{N}$.
\end{proof}

So it suffices to see that $H[g]\sats$``There is a countably iterable 
$M_{n+1}^\#(V')$'' where $V'=V_\eta$, so suppose otherwise; we will reach a 
contradiction.

Work in $H[g]$. All premice, robust constructions, etc, in what follows,
are over $V'$ (which is countable in $H[g]$), so we drop the phrase ``over $V'$''.
By our assumption, every robust
construction is $(n+1)$-small,
and therefore does not reach
any $M_n^\#$-closed model satisfying ``there is a Woodin cardinal''.
(Here $W$ is \emph{$M_n^\#$-closed} if for every $R\pins W$,
we have $M_n^\#(R)\pins W$.)

Further, if $W$ is any countably iterable $M_n^\#$-closed premouse
then $W\sats$``there is no Woodin cardinal''
and $W$ is $(0,\OR)$-iterable,
via strategy guided by Q-structures
$Q\ins M_n^\#(M(\Tt))$ (that is, analogous to $\Gamma$ above).
Thus, we can use core model theory
relative to the $M_n^\#$-operator.

Work in $V$. Note that if $W\in H\sats$``$W$ is a countably iterable 
$M_n^\#$-closed premouse'' then $W$ is $(0,\OR)$-iterable,
via a strategy as above.
Note that
\[ j\rest H:H\to H.\]
Let $E$ be the $V_\eta$-extender derived from $j$.
Let $J=\Ult_0(H,E)$; then the ultrapower satisfies
Los' theorem and we get an elementary $i^{H,0}_E:H\to J$
and the natural factor map $k:J\to H$ with $k\com i^{H,0}_E=j\rest H$.
So $j\rest V_\eta\cup\{V_\eta\}\sub i^{H,0}_E$,
so $\crit(k)>\eta$, if $\crit(k)$ exists.
In fact, $\crit(k)$ exists, because otherwise
$i^{H,0}_E\rest\OR=j\rest\OR$, but $i^{H,0}_E$ is amenable
to $N_\om[G]$ for some $N_\om$-set-generic $G$,
contradicting Lemma \ref{lem:j_rest_OR_non-amenable}. So $\crit(k)>\eta$. Let 
$\mu=\crit(k)$.
Let $F$ be the $(\mu,k(\mu))$-extender over $J$ derived from
$k$ (so $F$ measures $\pow(\mu^{<\om}\cross V_\eta)$). 
Now $E,F$ can be added generically to $H$
via Vopenka forcings $\QQ_E,\QQ_F$. Let $\xi\in\OR$
be such that $\xi>k(\mu)$ and $\QQ_E,\QQ_F\in V_\xi^{H}$ and $j(\xi)=\xi$.

Let $\theta$ be a regular cardinal with $\om<\theta<\crit(j)$
(note $\crit(j)$ is an inaccessible limit of inaccessibles). 
Let $\tau_0\in\OR$ be such that:
\begin{enumerate}[label=--]
 \item $\tau_0>\xi$ and $\cof(\tau_0)=\theta$,
 \item $j``\tau_0\sub\tau_0$ (it now follows that $j(\tau_0)=\tau_0$),
\item there is no $(\alpha,\pi)$ such that $\alpha<\tau_0$
and $\pi:V_\alpha\to\tau_0$ is a surjection, and
\item for every $\alpha<\tau_0$
and $A\sub V_\eta\cross\alpha$,
there is a forcing $\PP\in V_{\tau_0}^H$ and an $(H,\PP)$-generic
filter $G$ such that $A\in H[G]$.
\end{enumerate}

Note that $H\sub H_j$ and
${H_j}[g]\sats\ZFC$.
Let $\tau=(\tau_0^+)^{{H_j}[g]}$.
Note that
\begin{equation}\label{eqn:tau} 
H_j\sats\text{``}\tau=\text{least }\chi\in\OR\text{  s.t. }
\neg\exists\text{ surjection }\sigma:V_\eta\cross\tau_0\to 
\chi\text{''.}\end{equation}
By $\ZFC$, $\tau$ is regular in ${H_j}[g]$, hence also in 
${H_j},H,H[g]$. Note that 
${H_j}[g]\sats$``$\tau_0^{\aleph_0}=\tau_0$'',
so $H_j[g]\sats$``$\alpha^{\aleph_0}<\tau$ for each $\alpha<\tau$''; it follows that $H[g]$ also satisfies these two statements (but it seems that we might have $(\tau_0^+)^{H[g]}<\tau$).
Also,
\begin{equation}\label{eqn:tau_strongly_reg}{H_j}[g]\sats
\neg\exists (\alpha,\sigma)\ [\alpha<\tau\text{ and  
}\sigma:V_\eta^{<\om}\cross\alpha^{<\om}\to\tau\text{ is 
cofinal}];\end{equation}
It follows
that $H,H[g]$ agree about which ordinals have cofinality $\tau$.

\begin{clm} $j(\tau)=\tau$.\end{clm}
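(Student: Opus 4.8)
The plan is to prove $j(\tau)\le\tau$; the reverse inequality is immediate, since $j$ is nondecreasing on the ordinals. The heart of the matter is showing that $j$ carries $H_j$ into itself, and I would set this up as follows. First recall that $j\rest H:H\to H$ is elementary, where $H=\HOD(V_\eta)$ (here ``$x\in H$'' is expressible by an $\in$-formula with parameter $V_\eta$, and $j(V_\eta)=V_\eta$). Next, by the Vopenka argument from the lemma above, relativised over $V_\eta$, for each ordinal $\beta$ the class $\HOD_{\{j\rest\beta\}}(V_\eta)$ is a set-generic extension $H[G_\beta]$ of $H$, where $G_\beta$ is the Vopenka-generic over $H$ coding the set $j\rest\beta$. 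Moreover, by L\'evy reflection for the structure $(V,\in,\dot j)$ — valid in $\ZF(j)$, since Separation and Collection are allowed to mention $\dot j$ — every set hereditarily in $\OD_j(V_\eta)$ lies in $H[G_\beta]$ for all sufficiently large $\beta$ closed under $j$ (intersecting the clubs witnessing reflection for the finitely many defining formulas involved). Hence $H_j=\bigcup_\beta H[G_\beta]$.

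The key observation is that $j$ sends each $G_\beta$ to a Vopenka-generic over $H$ that, together with the object it codes, lies inside $H_j$. Applying $j$ — which is $\in$-elementary on $V$ — to the definition $G_\beta=\{p\in\Vop^*_\beta\mid (j\rest\beta)\in p^*\}$ gives $j(G_\beta)=\{p\in j(\Vop^*_\beta)\mid j(j\rest\beta)\in p^*\}$, i.e.\ the Vopenka-generic over $H$ coding the set $j(j\rest\beta)$; and $j(j\rest\beta)\in H_j$, since it is definable from $\dot j$ applied to the $\OD_j(V_\eta)$-set $j\rest\beta$ (it is irrelevant that $j(j\rest\beta)$ is in general \emph{not} an initial segment of $j$, e.g.\ $j(j\rest\crit(j))=\mathrm{id}\rest j(\crit(j))\ne j\rest j(\crit(j))$). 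Consequently $j``G_\beta\sub j(G_\beta)$, the filter $j(G_\beta)$ is $(H,j(\Vop^*_\beta))$-generic, and $H[j(G_\beta)]\sub H_j$; so $j$ lifts to an elementary embedding $j\rest H[G_\beta]:H[G_\beta]\to H[j(G_\beta)]$. In particular $j``H_j\sub H_j$.

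Now I would finish as follows. Fix $\delta<\tau$. By \eqref{eqn:tau} there is, in $H_j$, a surjection from $V_\eta\cross\tau_0$ onto $\delta$; say it lies in $H[G_\beta]$. Applying the lift $j\rest H[G_\beta]$ and using $j(V_\eta)=V_\eta$ and $j(\tau_0)=\tau_0$ (from $j``\tau_0\sub\tau_0$ and $\cof(\tau_0)=\theta<\crit(j)$), we obtain inside $H[j(G_\beta)]\sub H_j$ a surjection from $V_\eta\cross\tau_0$ onto $j(\delta)$; by \eqref{eqn:tau} again, $j(\delta)<\tau$. Thus $j``\tau\sub\tau$. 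Finally $j$ is continuous at $\tau$, because $\cof^V(\tau)<\crit(j)$; granting this, $j(\tau)=\sup j``\tau=\tau$, as desired. (The continuity input should follow from the choice of $\tau_0$ — cofinality $\theta<\crit(j)$ together with the stated covering and non-surjectivity conditions on $\tau_0$ — by essentially the computation already used to see $j(\tau_0)=\tau_0$, decomposing $\tau_0$ along a $V$-cofinal map $\theta\to\tau_0$ and using \eqref{eqn:tau}, \eqref{eqn:tau_strongly_reg}.)

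The step I expect to be the main obstacle is the closure $j``H_j\sub H_j$: because $j$ is \emph{not} elementary as a map of $(V,\in,\dot j)$, one cannot simply push $\OD_j(V_\eta)$-definitions forward, and the Vopenka detour above — hinging precisely on the fact that $j(j\rest\beta)$, although not an initial segment of $j$, still belongs to $H_j$, so that $j$ carries the Vopenka-generic $G_\beta$ to another such generic over $H$ inside $H_j$ — is what makes it go through. A secondary nuisance is the limited amount of choice guaranteed in $H_j$ (only $H_j[g]\models\ZFC$); this is why it is cleanest to keep phrasing $\tau$ throughout via \eqref{eqn:tau} (least ordinal that is not a surjective image) rather than as a cardinal successor, and to verify carefully the cofinality claim $\cof^V(\tau)<\crit(j)$.
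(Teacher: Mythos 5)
The real problem is the very last step, which you downplay as a ``secondary nuisance'': the claim that $\cof^V(\tau)<\crit(j)$. I do not see how to establish this, and I believe it is genuinely false in general. The ordinal $\tau_0$ was \emph{constructed} to have $V$-cofinality $\theta<\crit(j)$, which is what makes $j$ continuous at $\tau_0$ and gives $j(\tau_0)=\tau_0$ from $j``\tau_0\sub\tau_0$. But $\tau=(\tau_0^+)^{H_j[g]}$ is just whatever ordinal it happens to be; statement \eqref{eqn:tau_strongly_reg} concerns only cofinality as computed inside $H_j[g]$ (and its sub-extensions), not $V$-cofinality, and nothing in the surrounding construction controls $\cof^V(\tau)$. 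So the step from $j``\tau\sub\tau$ to $j(\tau)=\sup j``\tau$ is unjustified; $j$ gives $j(\tau)\geq\sup j``\tau$ but equality needs continuity you haven't earned.

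Your preliminary work is essentially sound, just heavier than necessary. The paper gets the inequality $j(\tau)\le\tau$ in one line by observing that $j\rest H_j:H_j\to H_{j(j)}$ is elementary (a standard $\ZFR$ fact, since $j:(V,\in,j)\to(V,\in,j(j))$ is elementary) and $H_{j(j)}\sub H_j$ because $j(j)$ is definable from $j$; then lifting \eqref{eqn:tau} through $j\rest H_j$ shows that all $\alpha<j(\tau)$ are surjective images of $V_\eta\times\tau_0$ inside $H_{j(j)}\sub H_j$, forcing $j(\tau)\le\tau$ with no continuity input at all. Your Vopenka decomposition $H_j=\bigcup_\beta H[G_\beta]$ and the lift $j\rest H[G_\beta]:H[G_\beta]\to H[j(G_\beta)]$ are correct as far as they go (in particular your observation that $j(j\rest\beta)=j(j)\rest j(\beta)\in\OD_j$, so that $H[j(G_\beta)]=\HOD_{\{j(j\rest\beta)\}}(V_\eta)\sub H_j$, is right), but they only deliver $j``\tau\sub\tau$, which is too weak. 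To finish along your lines, you would need to push the full \emph{defining statement} \eqref{eqn:tau} of $\tau$ through one of these lifts -- that is, find a single $\beta$ so large that $H[G_\beta]$ already witnesses \eqref{eqn:tau} -- and at that point you are reconstructing the map $j\rest H_j:H_j\to H_{j(j)}$ by hand. It is much cleaner to invoke it directly, as the paper does, and drop the continuity argument entirely.
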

\begin{proof} $j(j)$ is definable (over $V$)
from $j$, so
${H_{j(j)}}\sub{H_j}$.
Note
\[ j\rest{H_j}:{H_j}\to{H_{j(j)}} \]
is elementary. So lifting line (\ref{eqn:tau}) with $j$,
\begin{equation}\label{eqn:j(tau)} 
{H_{j(j)}}\sats\text{``}j(\tau)=\text{least }\chi\in\OR\text{  s.t. }
\neg\exists\text{ surjection }\sigma:V_\eta\cross\tau_0\to 
\chi\text{'',}\end{equation}
and since ${H_{j(j)}}\sub{H_j}$,
therefore $j(\tau)\leq\tau$,  so 
$j(\tau)=\tau$.
\end{proof}

By \cite[Theorem 3.14]{goldberg_measurable_cardinals},
there is a club class of cardinals $\varepsilon$
such that either $\varepsilon$ or $\varepsilon^+$ is measurable. So by intersecting with more clubs,
let $\Omega_0>\tau$ be a cardinal with properties like $\tau_0$, and either $\Omega_0$ or $\Omega_0^+$ is measurable. Let $\Omega\in\{\Omega_0,\Omega_0^+\}$ be measurable; in particular, $\Omega$ is regular (in $V$). Note that both $H[g]$ and  $H_j[g]$ satisfy
``$\tau,\Omega$ are regular, $2^{<\tau}<\Omega$, and $\all\alpha<\Omega\ [\alpha^\om<\Omega]$''.
Therefore we meet the requirements for developing $K$
in these models, as described in \cite[p.~6]{Kwithoutmeas}.

Work in $H[g]$. We follow \cite{Kwithoutmeas},
using notation as there. 
Let $W$, $W^*$ be as there, with $W^*=W$ in
Cases 1 and 2 of \cite[pp.~6,7]{Kwithoutmeas}.
Likewise, let $S(W^*)$ be as there;
so if $W$ 
is a mini-universe
then $W^*=W$ and $S(W^*)=S(W)$ is the stack over $W$, and otherwise 
$S(W^*)=W^*$.
Note that $W,W^*,S(W^*)$ are defined in $H$ and in $V$ from the parameters 
$V_\eta,\Omega,\tau$
(by homogeneity of the collapse and as bicephalus arguments give uniqueness of 
next 
extenders).
As discussed above and by \cite{jensen_robust} and \cite{Kwithoutmeas}, $W^*$ is 
fully iterable, in $H$, $H[g]$ and $V$,
and $H[g]\sats$``$W^*$ is stably-universal''
(universal with respect to stable weasels),
which implies $W^*$ is $M_n^\#$-closed.

Let $K=\widetilde{K}(\tau,\Omega)^{H[g]}$
and $\pi:K\to S(W^*)$ the uncollapse map,
which is elementary. 
So $K,\pi$ are also defined from the parameters $V_\eta,\tau,\Omega$
in $H[g]$, in $H$ and in $V$.
The proofs of \cite[Lemma 4.27, Lemma 4.31]{Kwithoutmeas}
go through as there, and hence 
$\tau\sub K$. Let $\tau_0'=\sup\pi``\tau_0$.

\begin{clm}$\tau_0'<\Omega$.
\end{clm}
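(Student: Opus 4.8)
The plan is a short cofinality argument driven by the regularity of $\Omega$ in $H[g]$. First I would record the two structural facts about $K$ and $\pi$ coming from the core model theory of \cite{Kwithoutmeas} that are already in play (they were used to get $\tau\sub K$): the model $K=\widetilde{K}(\tau,\Omega)^{H[g]}$ has ordinal height $\Omega$, and the uncollapse $\pi:K\to S(W^*)$ is continuous at $\Omega$, i.e.\ $\sup(\rg(\pi)\cap\Omega)=\Omega$; equivalently $\pi``\Omega\sub\Omega$. (As in the case where $\Omega$ is measurable treated there, this holds because $K$ is the transitive collapse of a hull of $S(W^*)$ that is cofinal in $\Omega$.) Since $\tau_0<\tau=(\tau_0^+)^{H_j[g]}<\Omega_0\leq\Omega$, we have $\tau_0<\Omega$, hence $\tau_0\in K$ and $\pi``\tau_0\sub\Omega$, so at least $\tau_0'=\sup\pi``\tau_0\leq\Omega$.

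It then remains to rule out $\tau_0'=\Omega$. Here I would use that $K$ and $\pi$ are definable in $H[g]$ from $V_\eta,\tau,\Omega$, so $\pi\rest\tau_0\in H[g]$ and hence the set $\pi``\tau_0$ — which has ordertype $\tau_0<\Omega$ — lies in $H[g]$. If $\tau_0'$ were $\Omega$, then $\pi``\tau_0$ would be a subset of $\Omega$ cofinal in $\Omega$ and of ordertype $\tau_0<\Omega$ lying in $H[g]$, contradicting the regularity of $\Omega$ in $H[g]$ (which was verified above, along with the regularity of $\tau$ and the cardinal arithmetic needed to run the $K$-construction). Thus $\tau_0'<\Omega$. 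One could equivalently phrase the last step via $\cof^{H[g]}(\tau_0)\leq\tau_0<\Omega$: composing, in $H[g]$, a cofinal map of that length into $\tau_0$ with $\pi\rest\tau_0$ would yield a cofinal map of length $<\Omega$ into $\Omega$, which is impossible.

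The step that needs genuine care — the only nonformal point — is the continuity of $\pi$ at $\Omega$, i.e.\ that $\pi$ does not ``jump over'' $\Omega$; when $S(W^*)$ is a proper class this does not follow merely from $\pi$ being an elementary uncollapse map, but it is exactly what the setup imported from \cite{Kwithoutmeas}, and the analogues of \cite[Lemma 4.27, Lemma 4.31]{Kwithoutmeas} already invoked to obtain $\tau\sub K$, deliver in the present situation. Granting that, the claim reduces to the one-line regularity computation above.
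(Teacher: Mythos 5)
Your argument hinges on the ``structural fact'' that $\pi``\Omega\sub\Omega$, which you bundle with $\sup(\rg(\pi)\cap\Omega)=\Omega$ and label ``continuity at $\Omega$.'' First, those two conditions are not equivalent: $\rg(\pi)\cap\Omega$ being cofinal in $\Omega$ says nothing about whether $\pi$ maps ordinals below $\Omega$ to ordinals at or above $\Omega$. Second — and this is the real problem — the condition $\pi``\Omega\sub\Omega$ is simply false in one of the two cases the construction of $W,W^*,S(W^*)$ allows. When $W=W^*$ is a mini-universe, $S(W^*)=S(W)$ is the stack over $W$, whose ordinal height exceeds $\Omega$; the hull whose transitive collapse is $K$ is then cofinal in $\OR^{S(W^*)}>\Omega$, so the uncollapse $\pi:K\to S(W^*)$ (with $\OR^K=\Omega$) necessarily sends ordinals below $\Omega$ to ordinals above $\Omega$. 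So ``$\tau_0'\leq\Omega$'' does not come for free from \cite{Kwithoutmeas}; that is precisely the case requiring work, and your proposal does not touch it.

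Your regularity observation is correct and is one piece of the paper's proof: it rules out $\tau_0'=\Omega$, since $\pi``\tau_0\in H[g]$ has order type $\tau_0<\Omega$ and $\Omega$ is regular in $H[g]$. But after that the paper must dispose of $\tau_0'>\Omega$, and it does so by noting that $\tau_0'>\Omega$ forces $W=W^*$ to be a mini-universe with $S(W)$ having largest cardinal $\Omega$; by elementarity there is $\bar\Omega\in K$ with $\pi(\bar\Omega)=\Omega$, and since some $\alpha<\tau_0$ has $\pi(\alpha)>\Omega$, order-preservation gives $\bar\Omega<\tau_0$, so $\bar\Omega$ is the largest cardinal of $K$ yet $\tau_0>\bar\Omega$ is a cardinal of $K$ (as $\tau\sub K$ and $\tau_0$ is a cardinal of $H[g]$) — contradiction. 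That ``largest cardinal'' argument is the step missing from your plan, and you cannot replace it by a continuity fact that does not hold in the mini-universe case.
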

\begin{proof}Suppose otherwise. Since
$\Omega$ is regular in $H[g]$,
therefore $\tau_0'>\Omega$. Therefore $W=W^*$ is a mini-universe
and $S(W)$ has largest cardinal $\Omega$,
so $\Omega\in\rg(\pi)$, and letting $\pi(\bar{\Omega})=\Omega$,
therefore $\bar{\Omega}<\tau_0$ and $\bar{\Omega}$
is the largest cardinal of $K$. But $\tau\sub K$
and $\tau_0$ is a cardinal in $H[g]$, hence also a cardinal in $K$,
a contradiction. 
\end{proof}

Working in $V$, from the parameters $\eta,\tau,\Omega$,
we can define a $\tau_0$-very soundness witness $Y\in H$;
that is, a stably-universal weasel $Y$ with the $\alpha$-definability
property at all $\alpha<\tau_0$.\footnote{Recall that
our premouse language
has symbols for all elements in $V_\eta$,
so in particular, this is trivial for $\alpha<\eta$.}
For note first that for each $\Gamma\in H[g]$, if $H[g]\sats$``$\Gamma$ is 
$W^*$-thick'',
then forcing calculations give some $\Gamma'\sub\Gamma$ such that 
$\Gamma'\in H$
and $H\sats\Coll(\om,V_\eta)\forces$``$\check{\Gamma'}$ is 
$\check{W^*}$-thick''.
So for each $\alpha\in\tau_0'\cut\rg(\pi)$,
there is $\Gamma\in H$ such that 
$H\sats\Coll(\om,V_\eta)\forces$``$\check{\Gamma}$ is $\check{W^*}$-thick''
and $\alpha\notin\Hull^{S(W^*)}(\Gamma)$,
and we can find some $x\in V_\eta$ and ordinals $\beta_\alpha<\xi_\alpha$
such that some such $\Gamma$ is definable over
$V_\xi$ from $(x,\eta,\beta_\alpha)$. Letting 
$\xi$ be such that $\xi\geq\sup_{\alpha<\tau_0'}\xi_\alpha$
and $V_\xi\elem_{10}V$,
we may assume $\xi_\alpha=\xi$ for all $\alpha$,
and then may assume $\beta_\alpha=\alpha$ for all $\alpha$,
by minimizing other ordinals. Therefore
in $V$, from parameters $(\tau,\Omega,\eta)$,
we can define the function sending $(x,\alpha)$
to the minimal choice $\Gamma_{x\alpha}$ for $\alpha$,
if there is one determined by $x\in V_\eta$. This function
is in $H$. Let $\Gamma$ be the intersection of its range.
Note that $H\sats\Coll(\om,V_\eta)\forces$``$\check{\Gamma}$ is 
$\check{W^*}$-thick'' (since $\tau_0'<\Omega$).
Let $Y'=\Hull^{S(W^*)}(\Gamma)$, $Y^+$ be the transitive collapse of $Y'$,
and $Y=Y^+|\Omega$; then $Y$ works.)

Since $Y$ was defined from $(\eta,\tau,\Omega)$,
we have $j\rest Y:Y\to Y$. Now let
$Y_1=\Ult_0(Y,E)$. We have \L o\'s's Theorem
and $i=i^{Y,0}_E:Y\to Y_1$ is elementary and the natural factor map $k_Y:Y_1\to Y$
is elementary with $k_Y\com i=j\rest Y$
and $\crit(k_Y)>\eta$, if $\crit(k_Y)$ exists. Note that in fact, $\crit(k_Y)$ exists and
(define $\kappa$ by)
\[\kappa=
\crit(k_Y)\leq \crit(k)=\mu<k(\mu)<\xi<\tau_0<\OR^Y.\]
 Let $F_Y$ be the $(\kappa,k_Y(\kappa))$-extender
over $Y_1$ derived from $k_Y$. Since $j(\tau_0)=\tau_0$,
we have $i^{Y,0}_E(\tau_0)=k_Y(\tau_0)=\tau_0$, so
$k_Y(\kappa)<\tau_0$, so our choice of $\tau_0$ ensures
that $(E,F_Y)$
can be added generically to $H$ via a forcing in $V_{\tau_0}^H$.
We can assume that $g$ is $H[E,F_Y]$-generic,
so $(E,F_Y)$
is added via  forcing over $H[g]$
via a forcing in $V_{\tau_0}^{H[g]}$,
so $H[g]$ and $H[g,E,F_Y]$ agree about
the relevant core model calculations (their collections of thick sets are similar enough).

Let $Y_2=\Ult_0(Y_1,F_Y)$
and $\ell_Y:Y_2\to Y$ the factor map; again we have \L o\'s
and elementarity and $\ell_Y\com i^{Y_1,0}_{F_Y}=k_Y$ and
$\crit(\ell_Y)>k_Y(\kappa)$.
Write $S_Y=S(Y)^{H[E,F_Y,g]}=S(Y)^{H[g]}$, etc;
recall that by convention, $S(Y)=Y$ unless $Y$ is a mini-universe.

\begin{clm}\label{clm:Ult_pres_stack}
Work in $H[E,F_Y,g]$. Then
\[ \Ult_0(S_Y,E)=S_{Y_1}\text{ and } \Ult_0(S_{Y_1},F_Y)=S_{Y_2} \]
and 
the ultrapower maps extend correspondingly, in that
\[ i^{Y,0}_E=i^{S_Y,0}_E\rest Y\text{ and }
i^{Y_1,0}_{F_Y}=i^{S_{Y_1},0}_{F_Y}\rest Y_1.\]
Moreover, in $V$, letting $k_Y^+:\Ult_0(S_Y,E)\to j(S_Y)$
be the factor map, we have $k_Y\sub k_Y^+$,
and likewise $\ell_Y\sub\ell^+_Y$.
\end{clm}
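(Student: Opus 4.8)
The plan is to establish the $E$-ultrapower statements first and then obtain the $F_Y$-ultrapower statements by running the identical argument with $S_{Y_1}$ in place of $S_Y$ and the factor embedding from the $E$-case in place of $j\rest S_Y$. Throughout I would use that $Y$ is defined from the parameters $(\eta,\tau,\Omega)$, all of which $j$ fixes, so $j(Y)=Y$, and since the stack operation $W\mapsto S(W)$ is $V$-definable from the $j$-fixed parameters $V_\eta,\tau,\Omega$, we get $j(S_Y)=S(j(Y))=S_Y$, with $j\rest S_Y:S_Y\to S_Y$ fully elementary (reading this levelwise if $S_Y$ is a proper class of $H[g]$). If $Y$ is not a mini-universe then $S_Y=Y$, and one checks similarly that $S_{Y_1}=Y_1$ and $S_{Y_2}=Y_2$, so the claim reduces to the corresponding statements about $Y,Y_1,Y_2$; hence I may assume $Y$ is a mini-universe, in which case all ultrapowers below are to be read level-by-level, cohering because $\crit(E)=\crit(j)$ and $\crit(F_Y)=\kappa$ lie far below $\OR^Y=\Omega$. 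Also recall that $S(Y),S(Y_1),S(Y_2)$ are computed the same in $H$, $H[g]$, $H[E,F_Y,g]$ and $V$, since the forcing adding $(E,F_Y)$ lies in $V_{\tau_0}^{H[g]}$ and the relevant strategies agree across these models.

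First I would check that \L o\'s's theorem holds for $\Ult_0(S_Y,E)$: since $E$ is the $V_\eta$-extender derived from $j$ and $j\rest S_Y:S_Y\to S_Y$, the \L o\'s argument already used for $L(V_\eta)$ applies verbatim. Hence $i^{S_Y,0}_E:S_Y\to\Ult_0(S_Y,E)$ is elementary, the factor map $k_Y^+:\Ult_0(S_Y,E)\to j(S_Y)=S_Y$, $k_Y^+([a,f]^{S_Y}_E)=j(f)(a)$, is well-defined and elementary with $k_Y^+\com i^{S_Y,0}_E=j\rest S_Y$, and in particular $\Ult_0(S_Y,E)$ is wellfounded; being a statement about the objects $S_Y,E$ and their ultrapower, this holds also in $H[E,F_Y,g]$, where the ultrapower is literally the same object. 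The usual uniformity of ultrapowers gives $i^{S_Y,0}_E\rest Y=i^{Y,0}_E$ and $i^{S_Y,0}_E(Y)=Y_1\ins\Ult_0(S_Y,E)$ — here one checks $\sup_{\alpha<\Omega}i^{S_Y,0}_E(\alpha)=i^{S_Y,0}_E(\Omega)=\OR^{Y_1}$ using that $\Omega$ is regular with $|V_\eta|<\Omega$, and that $i^{S_Y,0}_E$ preserves successor ordinals — and $k_Y^+\rest Y_1=k_Y$, which yields $k_Y\sub k_Y^+$.

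Next I would show $\Ult_0(S_Y,E)=S_{Y_1}$. For ``$\ins$'': $\Ult_0(S_Y,E)$ extends $Y_1$; it is $(0,\OR)$-iterable, by pulling the Q-structure-guided strategy of $S_Y=j(S_Y)$ back through $k_Y^+$ (equivalently, using the $M_n^\#$-guided strategy available in $H[E,F_Y,g]$, iterability being downward absolute); it is $M_n^\#$-closed, since that transfers from $S_Y$ by elementarity of $i^{S_Y,0}_E$, as $M_n^\#$-closed iterable premice compute the $M_n^\#$-operator internally; and by elementarity of $i^{S_Y,0}_E$ together with the standard handling of limit levels, every proper segment of $\Ult_0(S_Y,E)$ extending $Y_1$ is sound and projects to $\OR^{Y_1}$. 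Hence $\Ult_0(S_Y,E)$ is an initial segment of the stack $S(Y_1)=S_{Y_1}$. For equality I would use that $S(W)$ is the unique \emph{maximal} premouse over $W$ with those properties, and verify maximality of $\Ult_0(S_Y,E)$: any $M$ with $\Ult_0(S_Y,E)\pins M$, $M$ sound, $(0,\OR)$-iterable, $M_n^\#$-closed, over $Y_1$, all proper segments over $Y_1$ projecting to $\OR^{Y_1}$, would — via the $j$-definable recursion generating the successive levels of a stack over a weasel, so that $S_Y|\gamma$ is uniformly definable from $(Y,\gamma)$ in a way respected by the elementary $j:V\to V$ — lift through $k_Y^+$ to such a proper $\pins$-extension of $S_Y=S(Y)$, contradicting maximality of $S(Y)$. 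Thus $\Ult_0(S_Y,E)=S_{Y_1}$ and, in particular, $k_Y^+$ is an elementary map $S_{Y_1}\to S_Y$ with $k_Y\sub k_Y^+$.

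Finally I would repeat this with $S_{Y_1},F_Y$ in place of $S_Y,E$: since $k_Y\sub k_Y^+$ and $F_Y$ only measures subsets of $\kappa^{<\om}\cross V_\eta$, all of which lie in $Y_1=S_{Y_1}|\OR^{Y_1}$, the extender $F_Y$ is derived from $k_Y^+:S_{Y_1}\to S_Y$ exactly as it is from $k_Y:Y_1\to Y$; hence \L o\'s holds for $\Ult_0(S_{Y_1},F_Y)$, with $i^{S_{Y_1},0}_{F_Y}$ extending $i^{Y_1,0}_{F_Y}$, factor map $\ell_Y^+$ extending $\ell_Y$, and $\Ult_0(S_{Y_1},F_Y)=S(Y_2)=S_{Y_2}$. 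The step I expect to be the main obstacle is the \emph{equality} $\Ult_0(S_Y,E)=S_{Y_1}$ (as opposed to merely ``$\ins$''): one must know that $W\mapsto S(W)$ is absolute and definable enough that the maximality built into $S(W)$ is preserved under the ultrapowers by $E$ and $F_Y$, which is exactly where the fine-structural uniformity of the stack construction of \cite{Kwithoutmeas} and \cite{jensen_robust}, and the factor embeddings $k_Y^+,\ell_Y^+$ into $S_Y$, are needed.
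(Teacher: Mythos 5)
Your treatment of the easy directions matches the paper: \L o\'s's theorem (via the factor map), the identity $i^{Y,0}_E=i^{S_Y,0}_E\rest Y$, the identification $k_Y=k_Y^+\rest Y_1$, the conclusion that $\Ult_0(S_Y,E)$ is iterable (by copying through $k_Y^+$) and hence is an initial segment of $S_{Y_1}$, and the observation that the $F_Y$ case is symmetric. That all lines up with the paper.

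The gap is exactly where you flag it: the equality $\Ult_0(S_Y,E)=S_{Y_1}$. Your proposed step --- that a proper $\pins$-extension $M$ of $\Ult_0(S_Y,E)$ inside $S_{Y_1}$ would ``lift through $k_Y^+$'' to a proper extension of $S_Y$, ``via the $j$-definable recursion generating the stack, respected by the elementary $j$'' --- is not an argument that can be carried out as stated. The map $k_Y^+$ goes \emph{from} $\Ult_0(S_Y,E)$ \emph{into} $S_Y$, so an end-extension $M$ of its domain is not in its domain and is not moved by it; and the definition of $W\mapsto S(W)$ is not of a logical complexity directly preserved by a $\Sigma_1$-elementary $j:V\to V$, so ``$j$ respects the recursion'' does not by itself show anything about $\Ult_0(S_Y,E)$. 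What the paper actually does is: assume for contradiction a least $R\pins S_{Y_1}$, $R\nins\Ult_0(S_Y,E)$, $\rho_\om^R=\Omega$; form the \emph{long} extender $F_Y^+$ derived from $k_Y^+$ and take $R'=\Ult_\om(R,F_Y^+)$, a candidate proper $\pins$-extension of $S_Y$. It is then nontrivial and essential (and is most of the proof) to show that $R'$ is (i) wellfounded, using regularity of $\Omega$ via a hull argument; (ii) $(\om,\OR)$-iterable in $H_j$, using condensation against $Y$ and then Claim \ref{clm:iterability}; and (iii) actually an element of $H$, via the Vopenka-forcing homogeneity trick together with Jensen's condensation and the regularity of $\Omega$ in $V$. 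Only after (i)--(iii) does ``$R'\pins S(Y)=S_Y$'' follow, giving the contradiction $\OR^{R'}\leq\OR^{S_Y}$. Your sketch contains none of (i)--(iii), and in particular doesn't address the model-crossing problem that $R$ and $\Ult_\om(R,F_Y^+)$ live in $H[E,F_Y,g]$ while $S_Y$ is computed in $H$, which is what the Vopenka argument is there to resolve.
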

\begin{proof}
Assume that $W$ (and hence also $Y$) is a mini-universe,
as otherwise $S_Y=Y$, etc, and everything is trivial.

Since $\Omega$ is regular, etc, we have
\[ \Ult_0(Y,E)=\Ult_0(S_Y,E)|\Omega\text{ and }
i^{Y,0}_E=i^{S_Y,0}_E\rest Y.\] 
Also letting $k_Y^+:\Ult_0(S_Y,E)\to j(S_Y)$
be the factor map, we have $k_Y=k_Y^+\rest Y_1$.

In $V$, $\Ult_0(S_Y,E)$ is iterable, since we have $k_Y^+$. Hence it is also 
iterable in $H[E,F_Y]$ and $H[E,F_Y,g]$.
So $\Ult_0(S_Y,E)\ins S_{Y_1}$.

Now suppose  $\Ult_0(S_Y,E)\pins S_{Y_1}$.
Let $R\pins S_{Y_1}$ be least projecting to $\Omega$
with $R\nins\Ult_0(S_Y,E)$. Note that $R$ is $(n+1)$-small
(otherwise it would have $\rho_1=V_\eta$).

Note that $j(S_Y)=S_Y$, so 
$j\rest S_Y:S_Y\to S_Y$ and $\OR^{S_Y}=\OR^{\Ult_0(S_Y,E)}=\OR^{j(S_Y)}$,
so $\OR^R>\OR^{S_Y}$. 
Recall $k_Y^+:\Ult_0(S_Y,E)\to j(S_Y)=S_Y$ is the factor,
and $k_Y^+\com i^{S_Y,0}_E=j$.  Let $F_Y^+$ be the long $\Ult_0(S_Y,E)$-extender
derived from $k_Y^+$. So $\Ult_0(\Ult_0(S_Y,E),F_Y^+)=S_Y$
and $k_Y^+=i^{\Ult_0(S_Y,E),0`}_{F_Y^+}$.
Let $R'=\Ult_\om(R,F_Y^+)$ (that is, use all functions
definable from parameters over $R$ in forming the ultrapower)
and $\sigma_{RR'}:R\to R'$ the ultrapower map.
So $R'$ has the first-order theory of an iterable sound premouse,
with $\Omega+1\in\wfp(R')$ and $\rho_\om^{R'}=\Omega$.
In fact, $R'$ is wellfounded. For otherwise,
by the regularity of $\Omega$, there is $\bar{\Omega}<\Omega$
such that $R_0=\cHull_\om^{R'}(V_\eta\cup\bar{\Omega})$ is also illfounded,
but by the first-order properties, we can find such an $\bar{\Omega}$
with $R_0\pins R'$, and hence $\OR^{R_0}<\Omega$ wellfounded, a contradiction.

We have $R'\in{H_j}$. We claim that $H_j\models$``For every countable sound premouse $\bar{R}$, if there is an elementary $\pi:\bar{R}\to  R'$
then $\bar{R}$ is $(\om,\om_1+1)$-iterable''.
For working in $H_j$, where $\Omega$ is  regular, given any such $\Omega,\bar{R}$,
an appropriate application of condensation
(which $R'$ satisfies) gives that we can find $R''\pins Y$ and an elementary $\pi'':\bar{R}\to R''$,
and since $Y$ is iterable in $H_j$, this suffices.
But then by Claim \ref{clm:iterability}, in fact
$H_j\models$``$R'$ is $(\om,\OR)$-iterable''.

Now we claim that $R'\in H$. 
For $R'\in H[G^*]$ for $G^*$ for $H$-generic
filter $G^*$ for the appropriate instance of Vopenka forcing $\mathrm{Vop}$. So let $\gamma=\OR^{H'}$.
Suppose $R'\notin H$.
Then there is some $p\in\mathrm{Vop}$
and some $\mathrm{Vop}$-name $\dot{R}\in H$
such that $H\sats$``$p$ forces that $\dot{R}$ is a sound premouse satisfying condensation, $\check{S_Y}\pins\dot{R}$, and $\rho_\om^{\dot{R}}=\Omega$''.
A standard feature of Vopenka forcing is that for every $q\in\mathrm{Vop}$
there is a generic filter $G''\in V$ with $q\in G''$.
So fix such a filter $G''$ with $p\in G''$.
Let $R''=\dot{R}_{G''}$.
Then $\OR^{R'}=\OR^{R''}$ and $R',R''$ are both sound premice satisfying condenstion, with $\rho_\om^{R'}=\rho_\om^{R''}$. But $\Omega$ is regular (in $V$!),
and so Jensen's condensation argument (see the proof of  \cite[Lemma 3.1]{stacking_mice})  shows that $R'=R''$.
It follows that $R'\in H$.

Since $R'\in H$ and is iterable in $H_j$,
it is also iterable in $H$, and therefore $R'\pins S_Y$, a contradiction.

Essentially the same proof works for $S_{Y_2}$.
\end{proof}

\begin{clm}\label{clm:Y_1_def_prop}In $H[g,E,F_Y]$,
$Y_1$ has the $\alpha$-definability
property at all $\alpha<\tau_0$. Therefore 
$Y_1|\tau_0=Y|\tau_0=K|\tau_0$.\end{clm}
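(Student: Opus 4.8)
The plan is to transfer the $\alpha$-definability property from $Y$ to $Y_1=\Ult_0(Y,E)$ via the ultrapower map $i=i^{Y,0}_E$, using that $i$ (equivalently, by Claim \ref{clm:Ult_pres_stack}, $i^{S_Y,0}_E$) is elementary and that $\crit(i)>\eta$ with $i``\,\tau_0\sub\tau_0$ and $i(\tau_0)=\tau_0$. First I would recall that $Y$ was constructed to be a $\tau_0$-very soundness witness, i.e.\ a stably-universal weasel with the $\alpha$-definability property for every $\alpha<\tau_0$; this is a first-order-plus-thick-sets statement about $S(Y)$ (or $Y$ itself in the non-mini-universe case), namely that for every $\Gamma$ that is $Y$-thick (over the relevant collapse), $\alpha\in\Hull^{S(Y)}(\Gamma)$. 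Since $Y$ and all the relevant parameters ($V_\eta$, $\tau_0$, $\Omega$, the class of thick sets) are fixed by $j$, and $i$ agrees with $j\rest Y$ below $\crit(k_Y)$, the ultrapower $Y_1$ inherits the corresponding statement: for each $\alpha<\tau_0$, since $\alpha=i(\alpha)$ (as $\crit(i)=\crit(k_Y)=\kappa>\eta$ but actually we need $\alpha<\kappa$ — see below) or more carefully since $i``\tau_0$ is cofinal in $\tau_0$ and $Y_1$ has the $i(\alpha)$-definability property, one reads off the $\alpha$-definability property at all $\alpha<\tau_0$ in $Y_1$. One must check that thickness is preserved: a $\Gamma$ witnessing the relevant thick set for $Y_1$ pulls back under $i$ (or pushes forward), using that $H[g,E,F_Y]$ and $H[g]$ agree on the core model calculations, as already noted in the setup.

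The second sentence, $Y_1|\tau_0=Y|\tau_0=K|\tau_0$, I would get from the standard fact (from \cite{Kwithoutmeas}) that any two stably-universal weasels with the $\alpha$-definability property at all $\alpha<\tau_0$, that are moreover sufficiently iterable, must agree below $\tau_0$ — this is essentially the uniqueness part of the $K^c$/$K$ construction, proved by comparison: the definability property at stage $\alpha$ pins down the extender sequences below $\tau_0$ (no extender can be "hidden" away from a hull generated by a thick set). Concretely, compare $Y_1$ with $Y$ (both iterable in $H[g,E,F_Y]$, the former via $k_Y^+$ and Claim \ref{clm:Ult_pres_stack}, the latter since $Y\in H$ is iterable there), note the comparison is above $V_\eta$ and trivial on both sides below $\tau_0$ by the definability property, hence $Y_1|\tau_0=Y|\tau_0$. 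That $Y|\tau_0=K|\tau_0$ follows the same way from the construction of $K=\widetilde K(\tau,\Omega)$ together with $\pi:K\to S(W^*)$ and the fact (from \cite[Lemma 4.27, Lemma 4.31]{Kwithoutmeas} and $\tau\sub K$, already invoked) that $K$ is a stably-universal weasel with the definability property at the relevant ordinals; indeed $Y$ was cooked up precisely to agree with $K$ this low down.

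The main obstacle I expect is the careful bookkeeping around the critical point of $k_Y$: we only know $\crit(k_Y)=\kappa$ with $\eta<\kappa\leq\mu<\tau_0$, so $i(\alpha)$ need not equal $\alpha$ for $\kappa\leq\alpha<\tau_0$; hence the argument cannot just say "$i$ fixes $\alpha$" but must instead transfer the \emph{scheme} "for all $\alpha<\tau_0$, the $\alpha$-definability property holds" across $i$ using that $i(\tau_0)=\tau_0$ and that $i$ is continuous at $\tau_0$ in the right sense, then read it back down. The other delicate point is checking that the notion of "$Y_1$-thick set" computed in $H[g,E,F_Y]$ is the $i$-image of "$Y$-thick set" — but this is exactly the content of the parenthetical remark already made that the two models "agree about the relevant core model calculations (their collections of thick sets are similar enough)", so I would lean on that. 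Everything else is a routine instance of the comparison/uniqueness arguments of \cite{Kwithoutmeas} relativized to the $M_n^\#$-operator, which we have already licensed ourselves to use.
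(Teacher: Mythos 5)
The broad strategy you propose—transfer the definability property from $Y$ to $Y_1$ along the ultrapower map $i=i^{Y,0}_E$, handling the external thick-set business by shrinking/fixing the thick sets—is indeed the paper's strategy, and your sketch of the second sentence (comparison of $Y_1$ with $Y$ using the definability property on both sides) matches as well. But there are two substantive problems.

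First, you assert $\crit(i)>\eta$ and later $\crit(i)=\crit(k_Y)=\kappa>\eta$; this is false. The factor map $k_Y:Y_1\to Y$ has $\crit(k_Y)=\kappa>\eta$, but the ultrapower map $i=i^{Y,0}_E$ satisfies $i\rest V_\eta=j\rest V_\eta$, so $\crit(i)=\crit(j)=\kappa_0<\eta$. In particular $i$ moves plenty of ordinals between $\kappa_0$ and $\eta$ (and between $\eta$ and $\tau_0$), so neither the form "$i$ fixes $\alpha$ for $\alpha<\kappa$" nor any argument premised on $i$ being a short-extender ultrapower above $\eta$ can be right. The situation is qualitatively that of a \emph{long} extender whose generators are spread through $V_\eta$, and this is exactly what the paper exploits.

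Second, and more importantly, the phrase "one reads off the $\alpha$-definability property at all $\alpha<\tau_0$" is precisely where the real work lies and it is not a formal consequence of having it at the cofinal set $i``\tau_0$. The paper proves the bridging step explicitly: having shown $V_\eta\cup i^{S_Y,0}_E``\tau_0\sub\Hull^{S_{Y_1}}(V_\eta\cup\{V_\eta\}\cup\Gamma')$ for a suitably shrunk $\Gamma'$ fixed pointwise by $i$, it then uses that $S_{Y_1}=\Ult_0(S_Y,E)$ with all generators of $E$ in $V_\eta$ (so every element of $S_{Y_1}$ has the form $i(f)(a)$ with $a\in\langle V_\eta\rangle^{<\om}$, $f\in S_Y$) together with $\sup i^{S_Y,0}_E``\tau_0=\tau_0$ (continuity of $i$ at $\tau_0$, since $\cof(\tau_0)=\theta<\kappa_0=\crit(i)$; note one needs $\theta<\kappa_0$, not $\theta<\kappa$) to conclude $\tau_0\sub\Hull^{S_{Y_1}}(V_\eta\cup\{V_\eta\}\cup i^{S_Y,0}_E``\tau_0)$, and hence $\tau_0\sub\Hull^{S_{Y_1}}(V_\eta\cup\{V_\eta\}\cup\Gamma')$. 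Your proposal never invokes the generator/Vopenka-style fact that $V_\eta$ together with $i``\tau_0$ generates all ordinals $<\tau_0$ of the ultrapower, and without it the transfer along $i``\tau_0$ does not close up to $\tau_0$. So the proposal, while aimed in the right direction, has a genuine gap at the heart of the argument.
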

\begin{proof}We work in $H[g,E,F_Y]$.
By Claim \ref{clm:Ult_pres_stack}, $S_{Y_1}=\Ult_0(S_Y,E)$.
So let $\Gamma$ be any $Y_1$-thick
set. Then there is a $Y$-thick, $Y_1$-thick $\Gamma'\sub\Gamma$ such that
$i^{S_Y,0}_E\rest\Gamma'=\id$. We have
\[ V_\eta\cup\tau_0\sub\Hull^{S_Y}(V_\eta\cup\{V_\eta\}\cup\Gamma'),\]
\[ 
i^{S_Y,0}_E``(V_\eta\cup\tau_0)\sub\Hull^{S_{Y_1}}(i^{Y,0}_E``V_\eta\cup\{V_\eta\}
\cup\Gamma'),\]
\[ 
V_\eta\cup i^{S_Y,0}_E``\tau_0\sub\Hull^{S_{Y_1}}(V_\eta\cup\{V_\eta\}
\cup\Gamma'),\]
but since $S_{Y_1}=\Ult_0(S_Y,E)$ and the generators of $E$
are all in $V_\eta$, and also $j(\tau_0)=\tau_0$ and 
$i^{S_Y,0}_{E}(\tau_0)=\tau_0$, we have
\[ \tau_0=\sup i^{S_Y,0}_E``\tau_0\sub\Hull^{S_{Y_1}}(V_\eta\cup\{V_\eta\}\cup 
i^{S_Y,0}_E``\tau_0),\]
and therefore
\[ \tau_0\sub\Hull^{S_{Y_1}}(V_\eta\cup\{V_\eta\}\cup 
\Gamma'),\]
as desired.

The fact that $Y_1|\tau_0=Y|\tau_0=K|\tau_0$ is then a standard conclusion,
via comparing $Y_1$ with $Y$ and using that both $Y$ and $Y_1$
have the definability property at all $\alpha<\tau_0$.
\end{proof}

\begin{clm}\label{clm:phalanx_it}
 The phalanx $((Y,{<k_Y(\kappa)}),Y_2)$ is iterable in $V$,
 hence also in $H[E,F_Y]$ and $H[E,F_Y,g]$.
\end{clm}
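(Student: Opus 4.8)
The plan is to realize the phalanx $((Y,{<}k_Y(\kappa)),Y_2)$ as a derived object inside a genuine iteration of $Y$ (or of the stack $S_Y$), so that iterability of the phalanx follows from iterability of $Y$, which we already have in $V$ (and hence in $H[E,F_Y]$ and $H[E,F_Y,g]$, since those are set-generic extensions and iterability descends by the absoluteness discussed above, in particular by Claim \ref{clm:iterability}-style arguments). Concretely, I would first unravel the maps: we have $i=i^{Y,0}_E:Y\to Y_1$ with factor $k_Y:Y_1\to Y$, $\crit(k_Y)=\kappa$ and $k_Y\com i=j\rest Y$; and $Y_2=\Ult_0(Y_1,F_Y)$ with factor $\ell_Y:Y_2\to Y$, $\ell_Y\com i^{Y_1,0}_{F_Y}=k_Y$ and $\crit(\ell_Y)>k_Y(\kappa)$. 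So the extender $F_Y$ is the $(\kappa,k_Y(\kappa))$-extender derived from $k_Y$, and applying it to $Y_1$ gives $Y_2$ with the agreement $Y_2|k_Y(\kappa)=Y_1|k_Y(\kappa)$ below the image of the critical point, while $\ell_Y$ sends $Y_2$ back into $Y$ fixing everything up to $k_Y(\kappa)$.

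Next I would observe that $Y$ and $Y_1$ agree below $\kappa$ (indeed $Y_1|\tau_0=Y|\tau_0=K|\tau_0$ by Claim \ref{clm:Y_1_def_prop}, and $\kappa<\xi<\tau_0$), and that $F_Y$ is (equivalent to) an extender on the $Y_1$-sequence or at least on the $Y$-sequence in the relevant sense — this is where the derived-extender bookkeeping matters. The key structural point: the phalanx $((Y,{<}k_Y(\kappa)),Y_2)$ embeds into the phalanx $((Y,{<}k_Y(\kappa)),\Ult_0(Y,F_Y^{*}))$ where $F_Y^{*}$ is the extender derived from $k_Y$ applied directly to $Y$ — and $\Ult_0(Y,F_Y^{*})$ is just an iterate of $Y$ by a single extender with critical point $\kappa$. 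Then the standard technique (as in the proof that "$((M,{<}\nu),N)$ is iterable" when $N=\Ult(M,F)$ with $\crit(F)<\nu$ and $F$ on the $M$-sequence) applies: a tree on the phalanx can be copied to a tree on $Y$ via the maps $\id:Y\to Y$ and $\ell_Y:Y_2\to Y$ (or the appropriate $\Ult_0(Y,F_Y^{*})\to Y$), using the agreement of models and the commutativity of the copy maps with the phalanx's extenders. I would verify the copying hypotheses: the root model agreement $Y|k_Y(\kappa)=Y_2|k_Y(\kappa)$, and that the copy maps agree appropriately on the overlap, so that every extender used in the lifted tree is on the correct sequence and the shift lemma applies stage by stage.

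The main obstacle I expect is not the copying machinery per se but checking that we have a genuine factor/copy map $Y_2\to Y$ (namely $\ell_Y$) with the right critical point and agreement to start the copying, and that the iteration of $Y$ into which we copy is itself the $V$-iterate of $Y$ we control — i.e., that $\Ult_0(Y,F_Y^{*})$ (or a one-step iterate of $Y$) really does sit on $Y$'s iteration strategy and not merely abstractly. Since $Y$ was defined from $(\eta,\tau,\Omega)$, we have $j\rest Y:Y\to Y$, and the extenders $E,F_Y$ are derived from $j,k$; using Claim \ref{clm:Ult_pres_stack} (which tells us $\Ult_0(S_Y,E)=S_{Y_1}$ and the ultrapower maps cohere with $k_Y^{+},\ell_Y^{+}$ in $V$) we can push the whole computation up to $S_Y$, where $j(S_Y)=S_Y$ and $k_Y^{+},\ell_Y^{+}$ are honest factor maps into $S_Y$. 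So the iterability of the phalanx reduces, via these factor maps, to the iterability of $S_Y$ (equivalently $Y$), which holds in $V$ by the choice of $Y$ as a $\tau_0$-very soundness witness and the discussion of full iterability of $W^{*}$-like weasels via $M_n^{\#}$-guided strategies; it then descends to $H[E,F_Y]$ and $H[E,F_Y,g]$ since these are set-generic over (a ground containing) $H$ and iterability of such countably-iterable $M_n^{\#}$-closed structures is generically absolute.
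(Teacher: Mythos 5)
Your core argument is exactly the paper's: copy any tree on the phalanx $((Y,{<}k_Y(\kappa)),Y_2)$ to a tree on $Y$ using the copy maps $(\id,\ell_Y)$, which works because $\crit(\ell_Y)>k_Y(\kappa)$ gives the required root agreement, and iterability of $Y$ is already known. That two-line argument is correct and is what the paper does. However, the detour through $\Ult_0(Y,F_Y^{*})$ is a red herring and contains an error you should notice: $F_Y$ is not on the $Y$-sequence (indeed, if it were, then by Claim \ref{clm:Y_1_def_prop} it would be on the $Y_1$-sequence and $\kappa$ would be superstrong in $Y_1$ — which is precisely the contradiction the theorem is trying to reach at the very end), so $\Ult_0(Y,F_Y^{*})$ is \emph{not} an iterate of $Y$ ``by a single extender with critical point $\kappa$,'' and you cannot assert it lies on $Y$'s iteration strategy. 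The worry you raise in your final paragraph about this is legitimate, but it is moot: you copy directly into $Y$ itself via $\ell_Y$, not into a one-step ultrapower, so the question of whether $\Ult_0(Y,F_Y^{*})$ is a genuine iterate never arises.
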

\begin{proof}
In $V$, we can lift trees on $((Y,{<k_Y(\kappa)}),Y_2)$
to trees on $((Y,{<k_Y(\kappa)}),Y)$
via lifting maps $(\id,\ell_Y)$. This works because $\crit(\ell_Y)>k_Y(\kappa)$.
\end{proof}

\begin{clm}\label{clm:agmt_thru_k_Y(kappa)}$k_Y(\kappa)<\tau_0$ and $k_Y(\kappa)$ is an inaccessible cardinal
of each of $Y,Y_1,Y_2$, and $Y|k_Y(\kappa)=Y_1|k_Y(\kappa)=Y_2|k_Y(\kappa)$.
\end{clm}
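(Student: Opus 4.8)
The plan is to obtain all three conclusions from the two factor maps $k_Y:Y_1\to Y$ and $\ell_Y:Y_2\to Y$, together with the agreement $Y_1|\tau_0=Y|\tau_0$ supplied by Claim \ref{clm:Y_1_def_prop}; the only step with real content is the verification that $\kappa$ is inaccessible in $Y_1$. For $k_Y(\kappa)<\tau_0$: as already noted, $j(\tau_0)=\tau_0$ gives $i^{Y,0}_E(\tau_0)=k_Y(\tau_0)=\tau_0$, and since $\eta<\kappa<\tau_0$ and $k_Y$ is order preserving on the ordinals, $k_Y(\kappa)<k_Y(\tau_0)=\tau_0$.

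Next I would show that $\kappa$ is inaccessible in $Y_1$. Since $\crit(k_Y)=\kappa$, we have $k_Y\rest\kappa=\id$ and $k_Y(\kappa)>\kappa$. If $Y_1$ had a function $f:\gamma\to\kappa$ cofinal with $\gamma<\kappa$, then by elementarity $k_Y(f)$ would be cofinal in $k_Y(\kappa)$; but $k_Y(f)=f$ (as $k_Y$ fixes $\gamma$ and all ordinals below $\kappa$), whose range is contained in $\kappa<k_Y(\kappa)$, a contradiction; so $\kappa$ is regular in $Y_1$. If instead $\kappa=(\delta^+)^{Y_1}$ for some $\delta<\kappa$, then $k_Y(\kappa)=(\delta^+)^{Y}$; but $Y$ and $Y_1$ have the same subsets of $\delta$, since $\delta<\kappa<\tau_0$ and they agree below $\tau_0$ by Claim \ref{clm:Y_1_def_prop} (here one uses the choice of $\tau_0$, which makes $\tau_0$ a limit cardinal of both $Y$ and $Y_1$), so $(\delta^+)^{Y}=(\delta^+)^{Y_1}=\kappa$, contradicting $k_Y(\kappa)>\kappa$; so $\kappa$ is a limit cardinal of $Y_1$, hence inaccessible there. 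By elementarity of $k_Y$, $k_Y(\kappa)$ is then inaccessible in $Y$.

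It remains to propagate this to $Y_1$ and $Y_2$. Since $k_Y(\kappa)<\tau_0$ and $Y|\tau_0=Y_1|\tau_0$, we get $Y|k_Y(\kappa)=Y_1|k_Y(\kappa)$, and inaccessibility of $k_Y(\kappa)$ is decided inside this common initial segment, so $k_Y(\kappa)$ is inaccessible in $Y_1$ too. For $Y_2$: the factor map $\ell_Y:Y_2\to Y$ has $\crit(\ell_Y)>k_Y(\kappa)$, so $Y_2$ and $Y$ have the same initial segment of height $\crit(\ell_Y)$ --- this is the standard fact that a near-elementary map of premice is the identity, together with the two premice, below its critical point --- whence $Y_2|k_Y(\kappa)=Y|k_Y(\kappa)$; and since $\ell_Y$ fixes $k_Y(\kappa)$ and $Y\models$``$k_Y(\kappa)$ is inaccessible'', elementarity of $\ell_Y$ gives the same in $Y_2$. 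Combining the three agreements yields $Y|k_Y(\kappa)=Y_1|k_Y(\kappa)=Y_2|k_Y(\kappa)$, and $k_Y(\kappa)$ is inaccessible in each of $Y,Y_1,Y_2$. I expect the inaccessibility computation in $Y_1$ to be the only real obstacle; everything else is bookkeeping with the two factor maps and Claim \ref{clm:Y_1_def_prop}, the one point requiring care being that ``$\crit(\ell_Y)>k_Y(\kappa)$'' is used to get agreement of the \emph{premice} $Y_2$ and $Y$, fine structure included, rather than merely agreement on their ordinals.
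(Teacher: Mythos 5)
Your argument is correct and follows essentially the same route as the paper: the bound $k_Y(\kappa)<\tau_0$ from $k_Y(\tau_0)=\tau_0$, inaccessibility at the critical point from regularity plus the agreement $Y|\tau_0=Y_1|\tau_0$ of Claim \ref{clm:Y_1_def_prop} ruling out successor cardinals, and then propagation to $Y_1$ via that same agreement and to $Y_2$ via $\crit(\ell_Y)>k_Y(\kappa)$. The only cosmetic difference is that the paper transfers inaccessibility to $Y_2$ via elementarity of $i^{Y_1,0}_{F_Y}$ while you use elementarity of $\ell_Y$; both are fine.
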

\begin{proof}
Recall that $k_Y:Y_1\to Y$ is elementary
and $\kappa=\crit(k_Y)\leq\crit(k)=\mu<k(\mu)<\tau_0$,
and also $k_Y(\kappa)<\tau_0$ since $j(\tau_0)=\tau_0$.
So $\kappa=\crit(k_Y)$ is regular in
 $Y_1$ and 
 $k_Y(\kappa)$ is regular in $Y$.
We have $Y|\tau_0=Y_1|\tau_0$ by Claim \ref{clm:Y_1_def_prop},
and $\tau_0$ is a cardinal 
of $V$,
hence of $Y$ and $Y_1$.
And $k_Y(\kappa)<\tau_0$, 
so $k_Y(\kappa)$ is also a regular cardinal of $Y_1$,
and since $Y|\tau_0=Y|\tau_1$, not a successor, so is inaccessible there.
We also have 
$k_Y(\kappa)=i^{Y_1,0}_{F_Y}(\kappa)$
is inaccessible in $Y_2$, and $\ell_Y:Y_2\to Y$
is elementary with $k_Y(\kappa)<\crit(\ell_Y)$ (if $\crit(\ell_Y)$ exists),
so $Y_2|k_Y(\kappa)=Y|k_Y(\kappa)=Y_1|k_Y(\kappa)$.
\end{proof}

We can now complete the proof. We work in $H[E,F_Y,g]$.
We compare the phalanx mentioned in Claim \ref{clm:phalanx_it} 
with $Y_1$. We get a successful comparison $(\Tt,\Uu)$
with $\Tt$ on the phalanx and $\Uu$ on $Y_1$.
By the arguments in \cite{Kwithoutmeas}, $M^\Tt_\infty=M^\Uu_\infty$ and $b^\Tt,b^\Uu$ are non-dropping.
Both $Y$ and $Y_1$ have the definability property
at all $\alpha<\tau_0$. But $k_Y(\kappa)<\tau_0$.
So if $b^\Tt$ is above $Y$, the usual calculations with the definability
and hull properties give a contradiction.
So $b^\Tt$ is above $Y_2$. Let $Z=M^\Tt_\infty=M^\Uu_\infty$.
Then as in \cite{Kwithoutmeas}, $S_Z=M^{\Tt^+}_\infty=M^{\Uu^+}_\infty$
where $\Tt^+,\Uu^+$ are $\Tt,\Uu$ construed as trees on 
$((S_Y,{<k_Y(\kappa)}),S_{Y_2})$ and $S_{Y_1}$.
Let $\Gamma$ be $Y_2$-thick, $Y_1$-thick and $Z$-thick
and consist of fixed points for the embeddings $i^\Tt,i^\Uu,i^{S_{Y_1},0}_{F_Y}$.
Then since by Claim \ref{clm:Y_1_def_prop} we have
\[ \tau_0\sub\Hull^{S_{Y_1}}(V_\eta\cup\{V_\eta\}\cup\Gamma), \]
we get
\[ i^\Tt\com i^{S_{Y_1},0}_{F^Y}\rest\tau_0=i^\Uu\rest\tau_0.\]
But $\crit(i^\Tt)>k_Y(\kappa)$, and therefore $\crit(i^\Uu)=\kappa$ and
$i^\Uu(\kappa)=k_Y(\kappa)$
and $F_Y$ is the $(\kappa,i^\Uu(\kappa))$-extender
derived from $i^\Uu$. But by Claim \ref{clm:agmt_thru_k_Y(kappa)},
the comparison uses only extenders with index ${>i^\Uu(\kappa)}$
and $i^\Uu(\kappa)$ is a cardinal of $Y_1$, but then by the ISC,
the first extender used along $b^\Uu$ witnesses that $\kappa$ is superstrong in 
$Y_1$, a contradiction.
\end{proof}

\bibliographystyle{plain}
\bibliography{../bibliography/bibliography}

\end{document}